\newtheorem{thm}{Theorem}
\newtheorem{prop}[thm]{Proposition}
\newtheorem{lem}[thm]{Lemma}
\newtheorem{lem-constr}[thm]{Lemma-Construction}
\newtheorem{lem-def}[thm]{Lemma-Definition}
\newtheorem{cor}[thm]{Corollary}
\theoremstyle{definition}
\theoremstyle{definition}
\newtheorem{ex}[thm]{Example}
\newtheorem{rmk}[thm]{Remark}
\theoremstyle{definition}
\numberwithin{equation}{section}
\newcommand{\quash}[1]{}  %%Anything in \quash is ignored
\newcommand{\nc}{\newcommand}
\nc{\on}{\operatorname}
\newcommand{\frakg}{{\mathfrak g}}
\newcommand{\frakl}{{\mathfrak l}}
\newcommand{\frakp}{{\mathfrak p}}
\newcommand{\fraks}{{\mathfrak s}}
\newcommand{\bA}{{\mathbb A}}
\newcommand{\bC}{{\mathbb C}}
\newcommand{\bF}{{\mathbb F}}
\newcommand{\bG}{{\mathbb G}}
\newcommand{\bM}{{\mathbb M}}
\newcommand{\bQ}{{\mathbb Q}}
\newcommand{\bV}{{\mathbb V}}
\newcommand{\bZ}{{\mathbb Z}}
\newcommand{\mE}{{\mathcal E}}
\newcommand{\mF}{{\mathcal F}}
\newcommand{\mH}{{\mathcal H}}
\newcommand{\mO}{{\mathcal O}}
\nc{\Tate}{\mathrm{Tate}}
\nc{\al}{{\alpha}} \nc{\be}{{\beta}} \nc{\ga}{{\gamma}}
\nc{\ve}{{\varepsilon}} \nc{\Ga}{{\Gamma}} \nc{\la}{{\lambda}}
\nc{\La}{{\Lambda}}
\nc{\ad}{{\on{ad}}}
\nc{\Ad}{{\on{Ad}}}
\nc{\aff}{{\on{aff}}}
\nc{\Aff}{{\mathbf{Aff}}}
\nc{\Aut}{{\on{Aut}}}
\nc{\Bun}{{\on{Bun}}}
\nc{\cha}{{\on{char}}}
\nc{\Ch}{{\on{Ch}}}
\nc{\cl}{{\on{cl}}}
\nc{\Coh}{{\on{Coh}}}
\nc{\der}{{\on{der}}}
\nc{\Der}{{\on{Der}}}
\nc{\DerAff}{{\on{DerAff}}}
\nc{\diag}{{\on{diag}}}
\nc{\dR}{{\on{dR}}}
\nc{\End}{{\on{End}}}
\nc{\Fl}{{\mF\ell}}
\nc{\Gal}{{\on{Gal}}}
\nc{\Gr}{{\on{Gr}}}
\nc{\Hom}{{\on{Hom}}}
\nc{\id}{{\on{id}}}
\nc{\Id}{{\on{Id}}}
\nc{\ind}{{\on{ind}}}
\nc{\Ind}{{\on{Ind}}}
\nc{\infGrp}{{\infty\on{-Groupoid}}}
\nc{\inv}{{\on{Inv}}}
\nc{\Iso}{{\on{Isom}}}
\nc{\Lie}{{\on{Lie}}}
\nc{\Loc}{{\on{Loc}}}
\nc{\Nm}{{\on{Nm}}}
\nc{\Pic}{{\on{Pic}}}
\nc{\pf}{{\on{pf}}}
\nc{\pr}{{\on{pr}}}
\nc{\rec}{{\on{rec}}}
\nc{\res}{{\on{res}}}
\nc{\Iw}{\on{Iw}}
\newcommand{\Res}{{\on{Res}}}
\nc{\Sat}{{\on{Sat}}}
\nc{\Sh}{{\on{Sh}}}
\nc{\Sht}{{\on{Sht}}}
\nc{\Shv}{{\on{Shv}}}
\nc{\Spec}{{\on{Spec}}}
\nc{\Spf}{{\on{Spf}}}
\nc{\tr}{{\on{tr}}}
\newcommand{\Mod}{{\mathrm{-Mod}}}
\newcommand{\GL}{{\on{GL}}}
\nc{\GSp}{{\on{GSp}}} \nc{\GU}{{\on{GU}}} \nc{\SL}{{\on{SL}}}
\nc{\SU}{{\on{SU}}} \nc{\SO}{{\on{SO}}}
\nc{\Ql}{{\overline{\bQ}_\ell}}
\nc{\fg}{\frakg}
\nc{\fp}{\frakp}
\nc{\rat}{\overline{\bQ}}
\nc{\triv}{{\bf{1}}}
\nc{\dm}{/\!\!/}
\def\xcoch{\mathbb{X}_\bullet}
\def\xch{\mathbb{X}^\bullet}
\nc{\wt}{\mathrm{wt}}
\nc{\wSh}{\widetilde{\Sht}}
\nc{\Sph}{\on{Sph}}
\nc{\Fr}{\on{Frob}}
\nc{\Fp}{{^\sigma\mE}}
\nc{\und}{\underline}
\nc{\mmu}{{\mu_\bullet}}
\nc{\nnu}{{\nu_\bullet}}
\nc{\Hk}{{\on{Hk}}}
\nc{\lhk}{\Hk^{\on{loc}}}
\newcommand{\IC}{\on{IC}}
\nc{\bb}{{\mathbf{b}}}
\nc{\bp}{{\mathbf{p}}}
\nc{\MV}{{\bM\bV}}
\nc{\FM}{{\on{FM}}}
\nc{\FS}{{\on{FS}}}
\nc{\Rep}{{\on{Rep}}}
\nc{\SGr}{{\on{SGr}}}
\nc{\Mon}{{\on{Mon}}}
\nc{\FFS}{{\on{FFS}}}
\nc{\FFSI}{{\on{FFS}^{+}}}
\nc{\FFM}{{\on{FFM}}}
\nc{\FFMI}{{\on{FFM}^{+}}}
\nc{\un}{{\on{unip}}}
\newcommand{\xdashrightarrow}[2][]{\ext@arrow 3359 \rightarrowfill@@{#1}{#2}}
\def\rightarrowfill@@{\arrowfill@@\relax\relbar\shortrightarrow}
\def\arrowfill@@#1#2#3#4{%
  $\m@th\thickmuskip0mu\medmuskip\thickmuskip\thinmuskip\thickmuskip
   \relax#4#1
   \xleaders\hbox{$#4#2$}\hfill
   #3$%
}
\title{A note on Integral Satake isomorphisms}
\author{Xinwen Zhu}
\address{California Institute of Technology, 1200 East California Boulevard, Pasadena, CA 91125, USA}
\email{xzhu@caltech.edu}
\thanks{Supported by the National Science Foundation under agreement Nos. DMS-1902239.}
\begin{document}
\maketitle
\begin{abstract}
We formulate a Satake isomorphism for the integral spherical Hecke algebra of an unramified $p$-adic group $G$ and generalize the formulation to give a description of the Hecke algebra $H_G(V)$ of weight $V$, where $V$ is a lattice in an irreducible algebraic representation of $G$.
\end{abstract}
\tableofcontents

In this note, we formulate a \emph{canonical} integral Satake isomorphism, by identifying the integral spherical Hecke algebra $H_G$ of an unramified $p$-adic group $G$
with a $\bZ$-algebra associated to an affine monoid $V_{\hat{G},\rho_\ad}$ of the Langlands dual group $\hat{G}$. The precise statement can be found in Proposition \ref{prop: main}. 
There are two motivations. First, the usual Satake isomorphism depends on a choice of $q^{1/2}$ and therefore only gives a description of $H_G\otimes\bZ[q^{\pm 1/2}]$ in terms of the dual group $\hat{G}$ (e.g. see \cite{Gr}). Using Deligne's modification of the Langlands dual group  (e.g. see \cite{BG}), one can formulate a canonical Satake isomorphism for $H_G\otimes\bZ[p^{-1}]$. On the other hand, there is the mod $p$ Satake isomorphism, as discussed in \cite{He, HV}. 
However, the Langland duality does not appear explicitly in these works. %\footnote{In fact, in \cite[\S 7]{HV}, a Satake isomorphism over $\bZ$ has already been obtained. But still, the Langlands dual group does not show up.} 
Therefore, it is desirable to extend the classical Satake isomorphism to $\bZ$, which after mod $p$ recovers the mod $p$ Satake isomorphism. Another motivation comes from the recent work of R. Cass (\cite{Ca}) on the geometric Satake equivalence for perverse $\bF_p$-sheaves on the affine Grassmannian. In his work, what controls the tensor category is not the Langlands dual group, but some affine monoid $M_G$. I hope $V_{\hat{G},\rho_\ad}$ and $M_G$ are closely related\footnote{However, as explained to me by Cass, the monoid appearing in his work is solvable and therefore cannot be isomorphic to the one appearing in this note.}.

Our formulation generalizes easily to give a description of the Hecke algebra $H_G(V)$ of weight $V$ in terms of the affine monoid $V_{\hat{G},\la_\ad+\rho_\ad}$, where $V$ is a lattice in an irreducible algebraic representation of $G$ of highest weight $\la$. See Proposition \ref{prop: main weights} for the precise formulation. It follows from our formulation that the ring of invariant functions on the Vinberg monoid specializes to all $H_G(V)$'s.

\section{The Satake isomorphism}
\subsection{The $C$-group}
Let $G$ be a connected reductive group over a field $F$. Let $(\hat G, \hat B, \hat T ,\hat{e})$ be a pinned dual group of $G$ over $\bZ$. There is a natural action of the Galois group $\Ga_F$ of $F$ on $(\hat G, \hat B, \hat T, \hat{e})$,  induced by its action on the root datum of $G$. This action factors as $\Ga_F\twoheadrightarrow\Ga_{\widetilde{F}/F}\stackrel{\xi}{\hookrightarrow} \Aut(\hat G, \hat B, \hat T, \hat e)$, where $\Ga_{\widetilde{F}/F}$ is the Galois group of a finite Galois extension $\widetilde F/F$.

Let $2\rho:\bG_m\to\hat{T}$ denote the cocharacter given by sum of positive coroots of $\hat{G}$ (with respect to $\hat B$), and $2\rho_\ad$ its projection to the adjoint torus $\hat{T}_{\ad}\subset \hat{G}_\ad=\hat{G}/Z_{\hat{G}}$ of $\hat{G}$. It admits a unique square root $\rho_\ad:\bG_m\to \hat{T}_\ad$.
Define an action of $\bG_m$ on $\hat G$ by
\[
   \Ad\rho_\ad: \bG_m\stackrel{\rho_\ad}{\to} \hat T_\ad\stackrel{\Ad}{\to} \on{Aut}(\hat G).
\]
Note that $\Ad\rho_\ad$ still preserves $(\hat B,\hat T)$, but not $\hat e$. The two actions $\Ad\rho_\ad$ and $\xi$ on $\hat{G}$ commute with each other. Following the terminology of Buzzard-Gee \cite{BG}, we define the $C$-group of $G$ as an affine reductive group scheme over $\bZ$ as
\[
   {}^CG:=\hat{G}\rtimes(\bG_m\times\Ga_{\widetilde{F}/F}).
\]
This group scheme appears naturally from the geometric Satake equivalence, as to be reviewed in Theorem \ref{T: geom Sat} below. 
Note that our definition is different from \cite[Definition 5.3.2]{BG}, but is equivalent to it (see below).
We may write 
\[
   {}^CG\cong {^L}G\rtimes \bG_m\cong \hat{G}^T\rtimes\Ga_{\widetilde{F}/F},
\]   
where ${^L}G:=\hat{G}\rtimes_\xi\Ga_{\widetilde{F}/F}$ is the usual Langlands dual group, and
where $\hat G^T:=\hat G\rtimes_{\Ad\rho_\ad} \bG_m$, which fits into the short exact sequence
\begin{equation}\label{hat GT}
   1\to \hat G\to \hat G^T\xrightarrow{d_{\rho_\ad}} \bG_m\to 1.
\end{equation}   
Note that there is an isomorphism
\begin{equation}\label{R: another version}
   (\hat{G}\times \bG_m)/(2\rho\times\id)(\mu_2)\cong \hat{G}^T,\quad (g,t)\mapsto (g2\rho(t)^{-1}, t^2),
\end{equation}
and the left hand side is the more familiar form of Deligne's modification of the Langlands dual group (e.g. see \cite[Proposition 5.3.3]{BG}).
\begin{rmk}\label{R: Cgrp}
We may regard $\hat G^T$ as the dual group of a reductive group $G^T$, which is a central extension of $G$ by $\bG_m$ over $F$, and then 
regard ${^C}G\cong {^L}G^T$ 
as the usual Langlands dual group of $G^T$. This is the definition given in  \cite[Definition 5.3.2]{BG}.
\end{rmk}
We discuss a few examples. 
\begin{ex}
\begin{enumerate}
\item If $G=T$ is a torus, then $\hat{T}^T=\hat{T}\times\bG_m$ and ${^C}T=(\hat{T}\rtimes\Ga_{\tilde F/F})\times\bG_m$.
\item If $G$ is an inner form of a split reductive group, then ${^C}G=\hat{G}^T$. For example, if $G=\on{PGL}_2$, ${^C}G=\hat G^T=\GL_2$ and $d_{\rho_\ad}$ is the usual determinant map. In particular, $\hat{G}^T$ is not isomorphic to $\hat{G}\times\bG_m$ in general.
\item However, if $\rho_\ad$ lifts to a cocharacter $\tilde\rho\in \xcoch(\hat T)$ (which does not necessarily satisfy $2\tilde\rho=2\rho$), then $\tilde\rho$ induces an isomorphism
\begin{equation}\label{E: twisting}
   \hat G^T\simeq \hat G\times \bG_m,\ (g,t)\mapsto (g\tilde\rho(t), t).
\end{equation}
For example, if $G=\GL_n$ so $\hat G=\GL_n$, we can choose $\tilde\rho(t)= \on{diag}\{t^{n-1},\cdots,t,1\}$;
%\begin{equation}\label{E: GLT}
%   \GL_n\rtimes_{\Ad\rho_\ad}\bG_m\cong \GL_n\times\bG_m, \ (A,t)\mapsto (A\on{diag}\{t^{n-1},\cdots,t,1\}, t)
%\end{equation}   
If $G=\on{PGL}_{2n+1}$ so $\hat G=\SL_{2n+1}$, we can choose $\tilde\rho(t)=\on{diag}\{t^{n},t^{n-1},\cdots,t^{-n}\}$.
% \begin{equation}\label{E: oddSLT}
%   \SL_{2n+1}\rtimes_{\Ad\rho_\ad}\bG_m\cong \SL_{2n+1}\times\bG_m, \ (A,t)\mapsto (A\on{diag}\{t^{n},t^{n-1},\cdots,t^{-n}\}, t)
%\end{equation}

\item In general, even if $\rho_\ad$ lifts to an element $\tilde\rho\in\xcoch(\hat T)$, so $\hat{G}^T\cong \hat{G}\times\bG_m$, ${^C}G$ may not be isomorphic to ${^L}G\times \bG_m$, unless $\tilde\rho$ can be chosen to be $\Ga_{\widetilde{F}/F}$-invariant.
For example, if $G=\on{U}_{2n}$ is an even unitary group, associated to a quadratic extension $\widetilde{F}/F$, then $\hat{G}=\GL_{2n}$, and $\xi: \Ga_{\widetilde{F}/F}=\{1,c\}\to \Aut(\GL_{2n})$ is defined by
$\xi(c)(A)=J_{2n}(A^T)^{-1}J^{-1}_{2n}$,
where $J_{2n}$ is the anti-diagonal matrix with $(i,2n+1-i)$-entry $(-1)^i$. In this case, 
$\hat{G}^T$ is isomorphic to $\hat{G}\times\bG_m$, but  ${^C}G$ is not isomorphic to ${^L}G\times\bG_m$.
\end{enumerate}
\end{ex}

\subsection{Affine monoids}\label{SS: Aff Mon}
\quash{Let $G$ be a connected reductive group over $\mO$. Let $(\hat G, \hat B, \hat T ,e)$ be a pinned dual group over $\bZ$. As $G$ splits over an unramified extension, the geometric Frobenius $\sigma$ of $k$ acts on $(\hat G, \hat B, \hat T, e)$ by a finite order automorphism. Let $\langle\sigma\rangle\subset\Aut(\hat G,\hat B,\hat T, e)$ be the subgroup generated by $\sigma$.

Let $2\rho:\bG_m\to\hat{T}$ denote the cocharacter given by sum of positive coroots, and $2\rho_\ad$ its projection to the adjoint group $\hat{G}_\ad$ of $\hat{G}$. The latter admits a unique square root $\rho_\ad:\bG_m\to\hat{G}_\ad$.
We can define an action of $\bG_m$ on $\hat G$ as
\[
    \Ad\rho_\ad: \bG_m\stackrel{\rho_\ad}{\to} (\hat G)_\ad\stackrel{\Ad}{\to} \on{Aut}(\hat G).
\]
Note that $\Ad\rho_\ad$ still preserves $(\hat B,\hat T)$, but not $e$.
The two actions $\Ad\rho_\ad$ and $\sigma$ commute with each other. Following the terminology of Buzzard-Gee \cite{BG}, we define the $C$-group of $G$ as an affine reductive group scheme over $\bZ$ as
\[
   {^C}G:=\hat{G}\rtimes(\bG_m\times\langle\sigma\rangle).
\]
This group scheme appears naturally from the geometric Satake equivalence, as to be reviewed below. 
Note that our definition is different from \cite[Definition 5.3.2]{BG}, but is equivalent to it (see below).
By definition, we have a short exact sequence
\begin{equation}\label{E: CG}
1\to \hat{G}\to {^C}G\xrightarrow{d}\bG_m\times\langle\sigma\rangle\to 1.
\end{equation}

We may write 
\[
   {^C}G={^L}G\rtimes \bG_m,
\]   
where ${^L}G:=\hat{G}\rtimes_\xi\Ga_{\widetilde{F}/F}$ is the usual Langlands dual group. On the other hand, we may also write
\[
   {^C}G=\hat{G}^T\rtimes\Ga_{\widetilde{F}/F}
\]
where $\hat G^T:=\hat G\rtimes_{\Ad\rho_\ad} \bG_m$. Note that there is an isomorphism
\begin{equation}\label{R: another version}
   \hat{G}\times\bG_m/((1,1),(2\rho(-1),-1))\cong \hat{G}^T,\quad (g,t)\mapsto (g2\rho(t), t^2).
\end{equation}
We may regard $\hat G^T$ as the dual group of $G^T$, which is a central extension of $G$ by $\bG_m$, and then 
\[
   {^C}G\cong {^L}G^T
\] 
is the usual Langlands dual group of $G^T$. This is the definition given in  \cite[Definition 5.3.2]{BG}.

We discuss a few examples. First If $G$ is an inner form of a split reductive group, then ${^C}G=\hat{G}^T$.
\begin{ex}
If $G=\on{PGL}_2$, ${^C}G=\hat G^T=\GL_2$ and $d$ is the usual determinant map. In particular, $\hat{G}^T$ is not isomorphic to $\hat{G}\times\bG_m$ in general.
\end{ex}
However, if $\rho_\ad$ lifts to a cocharacter $\tilde\rho\in \xcoch(\hat T)$ (which does not necessarily satisfy $2\tilde\rho=2\rho$), then $\tilde\rho$ induces an isomorphism
\begin{equation}\label{E: twisting}
   \hat G^T\simeq \hat G\times \bG_m,\ (g,t)\mapsto (g\tilde\rho(t)^{-1}, t).
\end{equation}
\begin{ex}
(1) If $G=\GL_n$ so $\hat G=\GL_n$, there is a natural isomorphism 
\begin{equation}\label{E: GLT}
   \GL_n\rtimes_{\Ad\rho_\ad}\bG_m\cong \GL_n\times\bG_m, \ (A,t)\mapsto (A\on{diag}\{1,t,\cdots,t^{n-1}\}, t)
\end{equation}  
(2) If $G=\on{PGL}_{2n+1}$ so $\hat G=\SL_{2n+1}$, there is a natural isomorphism
 \begin{equation}\label{E: oddSLT}
   \SL_{2n+1}\rtimes_{\Ad\rho_\ad}\bG_m\cong \SL_{2n+1}\times\bG_m, \ (A,t)\mapsto (A\on{diag}\{t^{-n},t^{-n+1},\cdots,t^{n}\}, t)
\end{equation}
\end{ex}

In general, even if $\rho_\ad$ lifts to $\tilde\rho$, so $\hat{G}^T\cong \hat{G}\times\bG_m$, ${^C}G$ may not be isomorphic to ${^L}G\times \bG_m$.
\begin{ex}
If $G=\on{U}_{2n}$ is an even unitary group, associated to a quadratic extension $\widetilde{F}/F$, then $\hat{G}=\GL_{2n}$, and $\xi: \Ga_{\widetilde{F}/F}=\{1,c\}\to \Aut(\GL_{2n})$ is defined by
\[\xi(c)(A)=J_{2n}(A^T)^{-1}J^{-1}_{2n},\]
where $J_{2n}$ is the anti-diagonal matrix with $(i,2n+1-i)$-entry $(-1)^i$. In this case, 
\[\hat{G}^T\cong \hat{G}\times\bG_m,\quad {^C}G\ncong {^L}G\times\bG_m.\]
\end{ex}

This action allows us to define the group scheme  $\hat G^T$ which fits into a short exact sequence
\begin{equation}\label{hat GT}
   1\to \hat G\to \hat G^T:=\hat G\rtimes_{\Ad\rho_\ad} \bG_m\xrightarrow{d} \bG_m\to 1.
\end{equation}   
This group scheme appears naturally from the geometric Satake equivalence, as to be reviewed below. 
Note that there is an isomorphism
\begin{equation}\label{R: another version}
   \hat{G}\times \bG_m/(2\rho\times\id)(\mu_2)\cong \hat{G}^T,\quad (g,t)\mapsto (g2\rho(t), t^2),
\end{equation}
and the left hand side is the more familiar form of Deligne's modification of the Langlands dual group.

We discuss a few examples. 
\begin{ex}
(1) If $G=\on{PGL}_2$, $\hat G^T=\GL_2$ and $d$ is the usual determinant map. In particular, $\hat{G}^T$ is not isomorphic to $\hat{G}\times\bG_m$ in general.

However, if $\rho_\ad$ lifts to a cocharacter $\tilde\rho\in \xcoch(\hat T)$ (which does not necessarily satisfy $2\tilde\rho=2\rho$), then $\tilde\rho$ induces an isomorphism
\begin{equation}\label{E: twisting}
   \hat G^T\simeq \hat G\times \bG_m,\ (g,t)\mapsto (g\tilde\rho(t)^{-1}, t).
\end{equation}

(2) If $G=\GL_n$ so $\hat G=\GL_n$, there is a natural isomorphism 
\begin{equation}\label{E: GLT}
   \GL_n\rtimes_{\Ad\rho_\ad}\bG_m\cong \GL_n\times\bG_m, \ (A,t)\mapsto (A\on{diag}\{1,t,\cdots,t^{n-1}\}, t)
\end{equation}  
(3) If $G=\on{PGL}_{2n+1}$ so $\hat G=\SL_{2n+1}$, there is a natural isomorphism
 \begin{equation}\label{E: oddSLT}
   \SL_{2n+1}\rtimes_{\Ad\rho_\ad}\bG_m\cong \SL_{2n+1}\times\bG_m, \ (A,t)\mapsto (A\on{diag}\{t^{-n},t^{-n+1},\cdots,t^{n}\}, t)
\end{equation}
\end{ex}
}

We define an affine monoid scheme $V_{\hat{G},\rho_\ad}$ equipped with a faithfully flat monoid morphism 
\begin{equation}\label{E: monoid ext}
   d_{\rho_\ad}: V_{\hat{G},\rho_\ad}\to \bA^1
\end{equation}    
which extends the homomorphism $d_{\rho_\ad}: \hat{G}^T\to \bG_m$ from \eqref{hat GT}. 
It will be obtained as the pullback of a universal monoid (called the Vinberg monoid) associated to $\hat{G}$, whose definition we first briefly recall following the approach in \cite[\S 2, \S 3]{XZ2}.  Note that the discussions of \emph{loc. cit.} are given over a field. But as they work over any field, they actually work over the base $\bZ$.

We will use notations from \cite{XZ2} with $(\hat{G},\hat{B},\hat{T})$ in place of $(G,B,T)$ in \emph{loc. cit.}. We identify $\xch(\hat{T}_\ad)\subset\xch(\hat{T})$ with the root lattice inside the weight lattice.
Let $\xch(\hat{T}_\ad)_{\on{pos}}$ be the submonoid generated by simple roots $\{\hat{\al}_1,\ldots,\hat{\al}_r\}$ of $\hat{G}$ with respect to $(\hat{B},\hat{T})$, let $\xch(\hat{T})^{?}\subset\xch(\hat{T})$ for $?=+,-$ be the submonoids of dominant and anti-dominant weights, and let $\xch(\hat{T})_{\on{pos}}^+\subset\xch(\hat{T})$ be the submonoid generated by $\xch(\hat{T}_\ad)_{\on{pos}}$ and $\xch(\hat{T})^+$. 
Let 
\[
   \hat{T}_\ad^+=\Spec \bZ[\xch(\hat{T}_\ad)_{\on{pos}}],
\]   
which is an affine monoid containing $\hat{T}_\ad$ as the group of invertible elements. 
Note that every dominant coweight $\la: \bG_m\to \hat{T}_\ad$ of $\hat{G}_\ad$ can be extended to a monoid homomorphism
\begin{equation}\label{E: laplus}
   \la^+:\bA^1\to \hat{T}^+_\ad.
\end{equation}

We equip  $\xch(\hat{T})$ with the usual partial order $\preceq$, i.e. $\hat{\la}_1\preceq \hat{\la}_2$ if $\hat{\la}_2-\hat{\la}_1\in \xch(\hat{T}_\ad)_{\on{pos}}$. For $\hat{\nu}\in \xch(\hat{T})$, let $\hat{\nu}^*:=-w_0(\hat{\nu})$.
Recall that  the left and the right multiplication of $\hat{G}$ on itself induce a natural $(\hat{G}\times\hat{G})$-module structure on $\bZ[\hat{G}]$, which admits a canonical multi-filtration 
\[
   \bZ[\hat{G}]=\sum_{\hat{\nu}\in \xch(\hat{T})_{\on{pos}}^+}\on{fil}_{\hat{\nu}}\bZ[\hat{G}]
\]    
by $(\hat{G}\times\hat{G})$-submodules indexed by $\xch(\hat{T})_{\on{pos}}^+$. 
Here $\on{fil}_{\hat{\nu}}\bZ[\hat{G}]$ denotes the saturated $\bZ$-module that is maximal among all $(\hat{G}\times\hat{G})$-submodules $V\subset \bZ[\hat{G}]$ with the following property: for any pair $(\hat{\mu},\hat{\mu}')\in\xch(\hat T\times \hat T)$, if the weight space $V(\hat{\mu},\hat{\mu}')\neq 0$, then $\hat{\mu}\preceq \hat{\nu}^*$ and $\hat{\mu}'\preceq \hat{\nu}$.
One knows that $\on{fil}_\nu\bZ[\hat{G}]$ is finite free over $\bZ$ and that
\[
   \on{gr} \bZ[\hat{G}]=\bigoplus_{\hat{\nu}\in\xch(\hat{T})^+} S_{\hat{\nu}^*}\otimes S_{\hat{\nu}},
\]
where $S_{\hat{\nu}}$ denotes the Schur module of highest weight $\hat{\nu}$ (i.e. the induced $\hat{G}$-module from the character $-\hat{\nu}$ of $\hat{B}$).
See \cite[\S 2,\S 3]{XZ2} for detailed discussions, including the definition of associated graded of a multi-filtration. Now the Vinberg monoid $V_{\hat{G}}$ of $\hat{G}$ is defined as
\[
    V_{\hat{G}}=\Spec R_{\xch(\hat{T})_{\on{pos}}^+} \bZ[\hat{G}],
\]
where $R_{\xch(\hat{T})_{\on{pos}}^+} \bZ[\hat{G}]:=\oplus_{\hat{\nu}\in\xch(\hat{T})_{\on{pos}}^+} \on{fil}_{\hat{\nu}}\bZ[\hat{G}]$ denotes the Rees algebra associated to the above defined multi-filtration.
It is an affine monoid $\bZ$-scheme of finite type which admits a faithfully flat monoid morphism $d:V_{\hat{G}}\to \hat{T}_\ad^+$ such that
\begin{itemize}
\item $d^{-1}(\hat{T}_\ad)$ contains the group of invertible elements of $V_{\hat{G}}$ and is isomorphic to the quotient of $\hat{G}\times\hat{T}$ by $Z_{\hat{G}}$ with respect to the diagonal action $z\cdot(g,t)=(gz,tz)$ (so in particular $\hat{G}$ acts on $V_{\hat{G}}$ by left and right translations);
\item $d^{-1}(0)\cong \on{As}_{\hat{G}}$ as $(\hat{G}\times \hat{G})$-schemes, where
\[
    \on{As}_{\hat{G}}:= \Spec \on{gr} \bZ[\hat{G}]
\]
is called the asymptotic cone of $\hat{G}$.
\end{itemize}
In fact, the Vinberg monoid of $\hat{G}$ can be characterized as the unique affine monoid scheme $M$ equipped with a faithfully flat monoid morphism $d: M\to \hat{T}_\ad^+$ satisfying the above two properties.

Now, for a dominant coweight $\la:\bG_m\to \hat{T}_\ad$, let 
\begin{equation}\label{e: monoid la}
   d_\la: V_{\hat{G},\la}:=\bA^1\times_{\la^+,\hat{T}_\ad}V_G\to \bA^1
\end{equation}
be the pullback of of the homomorphism $d:V_{\hat{G}}\to \hat{T}^+_\ad$ along the map $\la^+$ from \eqref{E: laplus}. 
The homomorphism $\hat{G}\times\bG_m\xrightarrow{\id\times 2\rho}\hat{G}\times\hat{T}$ induces a homomorphism $\hat{G}^T\to \hat{G}\times^{Z(\hat{G})}\hat{T}$ by \eqref{R: another version}, and therefore
we obtain desired map \eqref{E: monoid ext} by setting $\la=\rho_\ad$ in \eqref{e: monoid la}.

Note that $V_{\hat{G}}$ and $\hat{T}^+_\ad$ are acted by $\Ga_{\widetilde{F}/F}$ (induced by its action $\xi$) and the homomorphism $d: V_{\hat{G}}\to \hat{T}^+_\ad$ is $\Ga_{\widetilde{F}/F}$-equivariant. If $\la:\bG_m\to \hat{T}_\ad$ is a dominant coweight fixed by $\Ga_{\widetilde{F}/F}$, the monoid $V_{\hat{G},\la}$ is also acted by $\Ga_{\widetilde{F}/F}$ and the map $d_{\la}$ extends to a homomorphism 
\[
   \tilde{d}_{\la}: V_{\hat{G},\la}\rtimes \Ga_{\widetilde{F}/F} \to\bA^1\times \Ga_{\widetilde{F}/F}.
\]
Note that $\rho_\ad$ is $\Ga_{\widetilde{F}/F}$-invariant. It follows that there is a natural isomorphism 
\begin{equation}\label{E: group unit in Vrho}
   \tilde{d}_{\rho_\ad}^{-1}(\bG_m\times\Ga_{\widetilde{F}/F})\cong {^C}G.
\end{equation}

\subsection{Invariant theory}
We fix a finite order automorphism $\sigma$ of $(\hat{G},\hat{B},\hat{T},\hat e)$ and a positive integer $q$. We consider the $\sigma$-twisted conjugation action of $\hat{G}$ on $V_{\hat{G}}$ given by
\begin{equation*}
   c_\sigma(g)(x)=gx\sigma(g)^{-1},\quad g\in \hat{G},\ x\in V_{\hat{G}}.
\end{equation*}
Let $\la: \bG_m\to \hat T_\ad$ be a dominant coweight. Then the conjugation action of $\hat{G}$ on $V_{\hat G}\rtimes\langle\sigma\rangle$ restricts to an action on $V_{\hat G}|_{d=\la(q)}\sigma$. % and the homomorphism $\tilde d_{\la}: V_{\hat G,\la}\rtimes\langle\sigma\rangle\to \bA^1\times\langle\sigma\rangle$ is $\hat{G}$-equivariant (where the target is equipped with the trivial $\hat{G}$-action).
Then we have the isomorphism 
$\bZ[V_{\hat G}|_{d=\la(q)}\sigma]^{\hat G}=\bZ[V_{\hat{G}}|_{d=\la(q)}]^{c_\sigma(\hat G)}$.

Let $V_{\hat{T}}$ be the closure of $\hat{T}\times^{Z_{\hat{G}}} \hat{T}\subset \hat{G}\times^{Z_{\hat{G}}}\hat{T}$ inside $V_{\hat{G}}$. This is a commutative submonoid of $V_{\hat{G}}$.
If we write the ring of regular functions of $\hat{T}\times^{Z_{\hat{G}}} \hat{T}$ by 
$$
   \bZ[\hat{T}\times^{Z_{\hat{G}}}\hat{T}]=\bigoplus_{(\hat{\la},\hat{\nu})}\bZ (e_1^{\hat{\la}}\otimes e_2^{\hat{\nu}}), 
$$
where the sum is taken over  pairs of weights $(\hat{\la},\hat{\nu})$ of $\hat{T}$ such that $\hat{\nu}+\hat{\la}\in\xch(\hat{T}_\ad)$\footnote{In \cite{XZ2} the condition reads as $\hat{\nu}-\hat{\la}\in \xch(\hat{T}_\ad)$. The sign convention here is more suitable for our purpose.}, and where $e_1^{\hat{\la}}\otimes e_2^{\hat{\nu}}$ denotes the corresponding regular function on $\hat{T}\times^{Z_{\hat{G}}}\hat{T}$,
then the ring of regular functions on $V_{\hat{T}}$ is the subring
\[
\bZ[V_{\hat{T}}]=\bigoplus_{(\hat{\la},\hat{\nu}), \hat{\nu}+\hat{\la}_{-}\in \xch(\hat{T}_\ad)_{\on{pos}}}\bZ (e_1^{\hat{\la}}\otimes e_2^{\hat{\nu}}),
\]
where
$\hat{\la}_{-}\in\xch(\hat T)^-$ denotes the anti-dominant weight in the Weyl group orbit of $\hat{\la}$. 

For $\hat\la\in \xch(\hat T)$, let $e^{\hat\la}$ denote the corresponding regular function on $\hat T$, and for $\hat\la\in \xch(\hat{T}_\ad)_{\on{pos}}$, let $\bar{e}^{\hat\la}$ be the corresponding regular function on $\hat{T}_\ad^+$.
The homomorphism $d: V_{\hat{T}}\to \hat{T}_\ad^+$ is given by the ring map $\bZ[\xch(\hat{T}_\ad)_{\on{pos}}]\to \bZ[V_{\hat{T}}]$ sending $\bar{e}^{\hat\la}$ to $1\otimes e_2^{\hat\la}$, which
admits a section $\fraks: \hat{T}_\ad^+\to V_{\hat{T}}$
\begin{equation*}
     \bZ[V_{\hat{T}}]\to \bZ[\xch(\hat{T}_\ad)_{\on{pos}}], \quad e^{\hat\la}_1\otimes e^{\hat\nu}_2\mapsto \bar{e}^{(\hat\nu+\hat\la)}.
\end{equation*}
Note that $\fraks|_{\hat{T}_\ad}: \hat{T}_\ad\to \hat{T}\times^{Z_{\hat{G}}}\hat{T}$ is induced by the diagonal embedding $\hat{T}\to \hat{T}\times\hat{T}$. We still denote by $\fraks$ the composed section $\hat{T}_\ad^+\to V_{\hat{G}}$. Its restriction to $\hat{T}_\ad$ induces a section $\hat{T}_\ad\to \hat{G}\times^{Z_{\hat{G}}}\hat{T}$, and therefore an isomorphism
$\hat{G}\times^{Z_{\hat{G}}}\hat{T}\cong \hat{G}\rtimes\hat{T}_\ad$, whose pullback along $\rho_\ad:\bG_m\to \hat{T}_\ad$ gives the isomorphism \eqref{R: another version}.

There is a natural injective map $i_1:\hat{T}\to \hat{T}\times^{Z_{\hat{G}}}\hat T\to V_{\hat{T}}$, where the first map is the inclusion into the first factor. Then we obtain a map
$(i_1,\fraks): \hat{T}\times \hat{T}_\ad^+\to V_{\hat T}$ over $\hat{T}_\ad^+$, which induces the ring map 
$$
   \bZ[V_{\hat {T}}]\to \bZ[\xch(\hat T)]\otimes\bZ[\xch(\hat T_\ad)_{\on{pos}}],\quad e_1^{\hat\la}\otimes e_2^{\hat\nu}\mapsto e^{\hat\la}\otimes \bar{e}^{(\hat\nu+\hat\la)}.
$$ 
It in turn induces an injective map
\begin{equation}\label{eq: inj function on VT}
\bZ[V_{\hat{T}}|_{d=\la(q)}]\subset \bZ[\xch(\hat{T})],    \quad e_1^{\hat\la}\otimes e_2^{\hat\nu}\mapsto q^{\langle\la,\hat\nu + \hat\la\rangle}e^{\hat\la}.
\end{equation}

Let $W=N_{\hat{G}}(\hat{T})/\hat{T}$ be the Weyl group of $(\hat{G},\hat{T})$ and let $W_0=W^\sigma$ be the subgroup of elements fixed by $\sigma$, which naturally acts on $\xch(\hat T)^\sigma$. Let $\hat{N}_0$ be the preimage of $W_0$ in $N_{\hat{G}}(\hat{T})$. The $\sigma$-twisted conjugation $c_\sigma$ of $\hat{G}$ on $V_{\hat G}$
induces the $\sigma$-twisted action of $\hat{N}_0$ on $V_{\hat{T}}$.
Recall that there is the Chevalley restriction isomorphism (see \cite[Proposition 4.2.3]{XZ2}, which was denoted as $\Res^\sigma_{+,\mathbf{1}}$)
\[
   \Res: \bZ[V_{\hat{G}}]^{c_\sigma(\hat{G})}\cong \bZ[V_{\hat T}]^{c_\sigma(\hat{N}_0)},
\] 
compatible with the $\bZ[\xch(\hat{T}_\ad)_{\on{pos}}]$-structure on both sides. The same argument as in \cite[Lemma 4.2.1]{XZ2} implies that the isomorphism $\Res$
induces isomorphisms
\begin{multline}\label{E: Chev res q}
   \Res: \bZ[V_{\hat{G}}|_{d=\la(q)}]^{c_\sigma(\hat{G})}\cong \bZ[V_{\hat{G}}]^{c_\sigma(\hat{G})}\otimes_{\bZ[\xch(\hat T_\ad)_{\on{pos}}],\bar{e}^{\hat{\al}_i}\mapsto q^{(\la,\hat{\al}_i)}}\bZ \\
   \cong \bZ[V_{\hat T}]^{c_\sigma(\hat{N}_0)}\otimes_{\bZ[\xch(\hat T_\ad)_{\on{pos}}],\bar{e}^{\hat{\al}_i}\mapsto q^{(\la,\hat{\al}_i)}}\bZ \cong \bZ[V_{\hat T}|_{d=\la(q)}]^{c_\sigma(\hat{N}_0)}.
\end{multline}

\begin{rmk}
The $c_\sigma$-action of $\hat{N}_0$ on $V_{\hat T}|_{d=\la(q)}$ induces an action of $\hat{N}_0$ on $\bZ[V_{\hat{T}}|_{d=\la(q)}]$. On the other hand, the $c_\sigma$-action of $\hat{N}_0$ on $\hat{T}$ induces an action of $\hat{N}_0$ on $\bZ[\xch(\hat{T})]$. The inclusion \eqref{eq: inj function on VT} is \emph{not} equivariant with respect to these two actions. Indeed, the base change of \eqref{eq: inj function on VT} to $\bQ$ becomes an isomorphism, under which the action of $\hat{N}_0$ on $\bQ[V_{\hat{T}}|_{d=\la(q)}]$ induces an action of $W_0$ on $\bQ[\xch(\hat T)^\sigma]$
given by
\begin{equation}\label{E: twist W-action}
w\bullet_{\la} e^{\hat\la}= q^{\langle \la, w\hat\la-\hat\la\rangle}  e^{w\hat\la},\quad w\in W_0,\  \hat\la\in\xch(\hat T)^\sigma.
\end{equation}
\end{rmk}

The following lemma follows from \eqref{eq: inj function on VT}.
\begin{lem}\label{L: image Chevalley restriction}
The image of the map 
\begin{equation}\label{eq: inj inv function on VT}
  \bZ[V_{\hat T}|_{d=\la(q)}]^{c_\sigma(\hat{N}_0)}\subset \bZ[V_{\hat T}|_{d=\la(q)}]\xrightarrow{\eqref{eq: inj function on VT}} \bZ[\xch(\hat T)]
\end{equation}
is the subring of $\bZ[\xch(\hat T)^\sigma]$ with a $\bZ$-basis given by elements of the form 
\[
   \sum_{\hat\la'\in W_0\hat\la}   q^{\langle\la,\hat\la'-\hat\la\rangle} e^{\hat\la'},\quad \hat \la\in \xch(\hat T)^{\sigma,-}:= \xch(\hat T)^\sigma\cap \xch(\hat T)^-.
\]   
\end{lem}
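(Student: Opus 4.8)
The plan is to make the map \eqref{eq: inj function on VT} explicit on a $\bZ$-basis of $\bZ[V_{\hat T}|_{d=\la(q)}]$ and then read off the $c_\sigma(\hat N_0)$-invariants directly. First I would rewrite each generator $e_1^{\hat\la}\otimes e_2^{\hat\nu}$ of $\bZ[V_{\hat T}]$ as $v_{\hat\la}\cdot(1\otimes e_2^{\hat\mu})$, where $v_{\hat\la}:=e_1^{\hat\la}\otimes e_2^{-\hat\la_-}$ for $\hat\la\in\xch(\hat T)$ and $\hat\mu:=\hat\nu+\hat\la_-\in\xch(\hat T_\ad)_{\on{pos}}$; this exhibits $\bZ[V_{\hat T}]$ as a \emph{free} module over $\bZ[\xch(\hat T_\ad)_{\on{pos}}]$ (acting through $d^\ast\colon\bar e^{\hat\mu}\mapsto 1\otimes e_2^{\hat\mu}$) with basis $\{v_{\hat\la}\}_{\hat\la\in\xch(\hat T)}$. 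Since $\xch(\hat T_\ad)_{\on{pos}}$ is the free commutative monoid on the simple roots $\hat\al_i$, the restriction $d=\la(q)$ is precisely the base change along $\bar e^{\hat\al_i}\mapsto q^{(\la,\hat\al_i)}$, so $\bZ[V_{\hat T}|_{d=\la(q)}]$ is $\bZ$-free on the images $v_{\hat\la}$ of the above, and, unwinding the construction of \eqref{eq: inj function on VT}, this map sends $v_{\hat\la}\mapsto q^{\langle\la,\hat\la-\hat\la_-\rangle}e^{\hat\la}$ (the exponent being $\geq 0$ as $\hat\la-\hat\la_-\in\xch(\hat T_\ad)_{\on{pos}}$ and $\la$ is dominant). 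In particular this recovers the injectivity of \eqref{eq: inj function on VT} asserted above, and from now on only this formula on basis vectors is needed.

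Next I would compute the invariants. As $\hat T\subset\hat N_0$ with $\hat N_0/\hat T=W_0$, we have $\bZ[V_{\hat T}|_{d=\la(q)}]^{c_\sigma(\hat N_0)}=\bigl(\bZ[V_{\hat T}|_{d=\la(q)}]^{c_\sigma(\hat T)}\bigr)^{W_0}$. A short computation with the $\sigma$-twisted conjugation shows that $c_\sigma(t)$ for $t\in\hat T$ scales $v_{\hat\la}$ by the character $\hat\la-\sigma(\hat\la)$ of $\hat T$, so $\bZ[V_{\hat T}|_{d=\la(q)}]^{c_\sigma(\hat T)}=\bigoplus_{\hat\la\in\xch(\hat T)^\sigma}\bZ\,v_{\hat\la}$. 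For the residual $W_0$-action, choosing the $\sigma$-fixed Tits representatives $\dot w$ of $w\in W_0=W^\sigma$, one checks that $c_\sigma(\dot w)$ carries $v_{\hat\la}$ to $v_{w\hat\la}$ for $\hat\la\in\xch(\hat T)^\sigma$ (using $(w\hat\la)_-=\hat\la_-$); hence $W_0$ simply permutes the basis $\{v_{\hat\la}\}_{\hat\la\in\xch(\hat T)^\sigma}$ as it does $\xch(\hat T)^\sigma$, and $\bZ[V_{\hat T}|_{d=\la(q)}]^{c_\sigma(\hat N_0)}$ is $\bZ$-free on the orbit sums $\sum_{\hat\la'\in W_0\hat\la}v_{\hat\la'}$, one for each $W_0$-orbit in $\xch(\hat T)^\sigma$.

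It remains to index these orbits and transport the basis through \eqref{eq: inj function on VT}. By the standard folding fact, $\xch(\hat T)^-\cap\xch(\hat T)^\sigma$ is a fundamental domain for $W_0=W^\sigma$ acting on $\xch(\hat T)^\sigma$; in particular every $W_0$-orbit in $\xch(\hat T)^\sigma$ contains a unique $\hat G$-anti-dominant element, so the orbit sums above are parametrized by $\hat\la\in\xch(\hat T)^{\sigma,-}$. For such $\hat\la$ one has $\hat\la'_-=\hat\la$ for every $\hat\la'\in W_0\hat\la$, so applying the injective ring map \eqref{eq: inj function on VT} to the $\bZ$-basis $\{\sum_{\hat\la'\in W_0\hat\la}v_{\hat\la'}\}_{\hat\la\in\xch(\hat T)^{\sigma,-}}$ of $\bZ[V_{\hat T}|_{d=\la(q)}]^{c_\sigma(\hat N_0)}$ yields a $\bZ$-basis of its image, given by the elements $\sum_{\hat\la'\in W_0\hat\la}q^{\langle\la,\hat\la'-\hat\la\rangle}e^{\hat\la'}$; these lie in $\bZ[\xch(\hat T)^\sigma]$ because $W_0$ preserves $\xch(\hat T)^\sigma$, and the image is a subring of $\bZ[\xch(\hat T)^\sigma]$ as the image of a subring under a ring homomorphism. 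This is precisely the asserted description. The only input beyond a bookkeeping of weights is the folding fact, and that — together with fixing the sign conventions for the $\sigma$-action entering the $c_\sigma(\hat T)$-weight and the $W_0$-permutation formula — is where I would take the most care.
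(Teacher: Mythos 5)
Your proof is correct and carries out in detail exactly the computation the paper has in mind when it says the lemma ``follows from \eqref{eq: inj function on VT}'': rewrite the basis in the form $v_{\hat\la}\cdot(1\otimes e_2^{\hat\mu})$ to see $\bZ[V_{\hat T}]$ as free over $\bZ[\xch(\hat T_\ad)_{\on{pos}}]$, specialize, compute $c_\sigma(\hat T)$-invariants, and then the residual $W_0$-permutation on the surviving $v_{\hat\la}$'s. All the bookkeeping you flag (the identity $(w\hat\la)_-=\hat\la_-$, the choice of $\sigma$-fixed lifts, and the identification of $\xch(\hat T)^{\sigma,-}$ as a fundamental domain) is standard and handled correctly; note also that the formula $v_{\hat\la}\mapsto v_{w\hat\la}$ is independent of the choice of lift and is in fact forced by the compatibility with the $\bullet_\la$-action of \eqref{E: twist W-action} recorded in the paper's subsequent Remark, so the lift question you were worried about is not a genuine loose end.
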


\subsection{Classical Satake isomorphism over $\bZ$}\label{SS: statement}
Now, we assume that $F$ is a non-archimedean local field, with $\mO$ its ring of integers and $k\simeq \bF_q$ the residue field. Let $G$ be a connected reductive group over $\mO$. 
In this case, $\widetilde{F}/F$ is an unramified extension and $\Ga_{\widetilde{F}/F}\cong\langle\sigma\rangle$ is generated by the \emph{geometric} $q$-Frobenius $\sigma$ of $k$\footnote{Using the arithmetic Frobenius will lead to a different formulation by taking the dual.}. We also fix a uniformizer $\varpi\in \mO$. Then every $\hat\la\in\xcoch(T)^\sigma$ gives a well defined point $\hat\la(\varpi)\in T(F)\subset G(F)$.

Let $K=G(\mO)$, and let $H_G=C_c(K\backslash G(F)/ K,\bZ)$ be the space of compactly supported bi-$K$-invariant $\bZ$-valued functions on $G(F)$. This is a natural algebra under convolution (with the chosen Haar measure on $G(F)$ such that the volume of $K$ is $1$). Let $T$ be the abstract Cartan of $G$, which is defined as the quotient of a Borel subgroup $B\subset G$ over $\mO$ by its unipotent radical $U\subset B$. (But $T$ is canonically defined, independent of the choice of $B$, e.g. see \cite[0.3.2]{Z17b}).
For a choice of Borel subgroup $B\subset G$ over $\mO$, we define the classical Satake transform
\begin{equation}\label{E: Sat tran}
    \on{CT}^{\cl}: H_G\to \bZ[\xch(\hat{T})^\sigma],\quad f\mapsto  \on{CT}^{\cl}(f)=\sum_{\hat{\la}\in\xch(\hat{T})^\sigma} \bigl(\sum_{u\in U(F)/U(\mO)}f(\hat{\la}(\varpi) u)\bigr)e^{\hat{\la}}.
\end{equation}
This map is independent of the choice of $B$ and the uniformizer $\varpi$.

\begin{prop}\label{prop: main}
There exists a unique isomorphism 
$$\Sat^{\cl}: \bZ[V_{\hat{G},\rho_{\ad}}\rtimes\langle\sigma\rangle|_{\tilde d_{\rho_\ad}=(q,\sigma)}]^{\hat{G}}\xrightarrow{\cong}  H_G,$$ 
which we call the Satake isomorphism, making the following 
diagram commutative
\[
\xymatrix{
  \bZ[V_{\hat{G},\rho_{\ad}}\rtimes\langle\sigma\rangle|_{\tilde d_{\rho_\ad}=(q,\sigma)}]^{\hat{G}}\ar_-\cong^-{\Sat^{\cl}}[rr] \ar_{\Res}^-\cong[d]&& H_G \ar^-{\on{CT}^{\cl}}[d]\\
\bZ[V_{\hat{T}}|_{d=\rho_\ad(q)}]^{c_\sigma(\hat{N}_0)}\ar@{^{(}->}^-{\eqref{eq: inj inv function on VT}}[rr] && \bZ[\xch(\hat{T})^\sigma].
}
\]
In particular, $H_G$ is finitely generated.
\end{prop}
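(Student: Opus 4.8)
The plan is to construct $\Sat^{\cl}$ by going around the diagram: define it as the composite of $\Res$, the inclusion \eqref{eq: inj inv function on VT}, and a (to-be-established) inverse of $\on{CT}^{\cl}$ onto its image. Concretely, the strategy reduces to two facts. First, by the classical theory of the Satake isomorphism (over $\bZ$, as developed e.g.\ in the work of Herzig and Henniart--Vign\'eras, or by a direct triangularity argument), $\on{CT}^{\cl}$ is injective and its image in $\bZ[\xch(\hat{T})^\sigma]$ has a $\bZ$-basis indexed by $\hat\la\in\xch(\hat T)^{\sigma,-}$ of the form $e^{\hat\la}+\sum_{\hat\mu\prec\hat\la}c_{\hat\mu}e^{\hat\mu}$ for suitable coefficients $c_{\hat\mu}$. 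Second, by Lemma~\ref{L: image Chevalley restriction} together with the isomorphism $\Res$ of \eqref{E: Chev res q} (with $\la=\rho_\ad$, $q$ the residue cardinality), the left column identifies $\bZ[V_{\hat{G},\rho_\ad}\rtimes\langle\sigma\rangle|_{\tilde d_{\rho_\ad}=(q,\sigma)}]^{\hat G}$ with the subring of $\bZ[\xch(\hat T)^\sigma]$ spanned by the elements $\sum_{\hat\la'\in W_0\hat\la}q^{\langle\rho_\ad,\hat\la'-\hat\la\rangle}e^{\hat\la'}$, $\hat\la\in\xch(\hat T)^{\sigma,-}$. The heart of the matter is therefore to show these two subrings of $\bZ[\xch(\hat T)^\sigma]$ coincide.

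The key step is thus a matching of lattices: I would first note that $\langle\rho_\ad,\hat\la'-\hat\la\rangle = \langle\rho,\hat\la'-\hat\la\rangle$ is the quantity appearing in the classical Satake formula, where for $\hat\la$ anti-dominant and $\hat\la'\in W_0\hat\la$ one has $\hat\la'-\hat\la\in\xch(\hat T_\ad)_{\on{pos}}$ and the pairing with $\rho$ is a nonnegative integer; in fact it equals the "length-type" exponent in $[\on{Gr}]$ or $[\on{HV}]$. So the basis element of $\bZ[V_{\hat T}|_{d=\rho_\ad(q)}]^{c_\sigma(\hat N_0)}$ attached to $\hat\la$ has leading term (with respect to $\preceq$) exactly $e^{\hat\la}$ with coefficient $1$ and all lower terms divisible by $q$. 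On the other side, the classical Satake basis element $\on{CT}^{\cl}(\mathbf{1}_{K\hat\la(\varpi)K})$ — or rather the appropriately normalized characteristic function, here the convention makes the support $\hat\la(\varpi)$ with $\hat\la$ anti-dominant — also has leading term $e^{\hat\la}$ with coefficient $1$, by the Iwasawa/Cartan decomposition count $\#(U(F)/U(\mathcal O))$ on the big cell, and lower terms supported on $\hat\mu\prec\hat\la$. Since both families are triangular with unit leading coefficients for the \emph{same} poset-indexed basis $\{e^{\hat\la}\}_{\hat\la\in\xch(\hat T)^{\sigma,-}}$ of a common ambient free $\bZ$-module, to conclude that the $\bZ$-spans agree it suffices to check that the lower-order coefficients on the two sides agree, i.e.\ that the classical Satake transform of the characteristic function literally equals $\sum_{\hat\la'\in W_0\hat\la}q^{\langle\rho,\hat\la'-\hat\la\rangle}e^{\hat\la'}$ modulo the span of strictly smaller basis elements — equivalently, that the "constant-term" part (the $W_0$-orbit sum) is computed by the Macdonald-type formula with exactly these $q$-powers. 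This is precisely the content of the classical rank-one/Macdonald computation of the Satake transform; I would cite it in the form given in $[\on{HV}]$ (the mod-$p$ and integral Satake) and lift to $\bZ$, using that everything is a polynomial identity in $q$.

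Once the two subrings are identified, $\Sat^{\cl}$ is \emph{defined} to be $(\on{CT}^{\cl})^{-1}\circ\eqref{eq: inj inv function on VT}\circ\Res$; it is an isomorphism because each of the three maps is (the outer two by the cited results and Lemma~\ref{L: image Chevalley restriction}, the inclusion by the lattice-matching), and the diagram commutes by construction. Uniqueness is immediate: the bottom map $\eqref{eq: inj inv function on VT}$ and $\Res$ are injective, so any map making the square commute must equal this composite. Finally, finite generation of $H_G$ follows because $\bZ[V_{\hat G}]^{c_\sigma(\hat G)}$ is a finitely generated $\bZ$-algebra (the Vinberg monoid $V_{\hat G}$ is an affine scheme of finite type over $\bZ$ and invariants of a reductive group acting on a finite-type algebra over a Noetherian ring are finitely generated), hence so is its specialization $\bZ[V_{\hat G}|_{d=\rho_\ad(q)}]^{c_\sigma(\hat G)} = \bZ[V_{\hat G,\rho_\ad}\rtimes\langle\sigma\rangle|_{\tilde d_{\rho_\ad}=(q,\sigma)}]^{\hat G}$, and $H_G$ is isomorphic to it.

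I expect the main obstacle to be the lattice-matching step, and within it the precise bookkeeping of sign/normalization conventions: whether the relevant functions are supported at dominant or anti-dominant cocharacters, whether the exponents are $\langle\rho,\cdot\rangle$ or $\langle 2\rho,\cdot\rangle$, and the compatibility of the $\sigma$-twisted structures (the difference between $W$ and $W_0=W^\sigma$, and the fact noted in the Remark that \eqref{eq: inj function on VT} is \emph{not} $\hat N_0$-equivariant so that the twisted $W_0$-action \eqref{E: twist W-action} is what genuinely appears). The conceptual input — triangularity of both bases and the Macdonald formula for the classical Satake transform — is classical; the work is in checking that the $q$-powers produced by the Rees-algebra/Vinberg-monoid construction are exactly the $q$-powers in Macdonald's formula, which is the whole point of introducing $V_{\hat G,\rho_\ad}$ and should come out of \eqref{eq: inj function on VT} applied with $\la=\rho_\ad$.
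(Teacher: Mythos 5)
Your proof is correct and takes essentially the same route as the paper's: reduce, via Lemma~\ref{L: image Chevalley restriction} and the restriction isomorphism \eqref{E: Chev res q}, to identifying the image of $\on{CT}^{\cl}$ with the lattice $\bZ[\xch(\hat{T})^\sigma]\cap\bQ[\xch(\hat{T})^\sigma]^{W_0\bullet_{\rho_\ad}}$, and derive that identification from the classical Satake theory. The paper packages your triangularity/Macdonald step in one line --- $\on{CT}^{\cl}$ differs from the normalized Satake transform by a square root of the modular character, turning $W_0$-invariance into $W_0\bullet_{\rho_\ad}$-invariance --- but the content is the same as what you unpack (modulo the bookkeeping you already flag: for $\hat\la$ anti-dominant, the other elements $\hat\la'\in W_0\hat\la$ satisfy $\hat\la\preceq\hat\la'$, so the recursion runs in the direction opposite to the one you wrote).
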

\begin{proof}
The uniqueness is clear.  To prove the existence, by Lemma \ref{L: image Chevalley restriction}, it is enough to show that the Satake transform \eqref{E: Sat tran} induces an isomorphism
\[
     \on{CT}^{\cl}: H_G\xrightarrow{\cong} \bZ[\xch(\hat{T})^\sigma]\cap \bQ[\xch(\hat{T}^\sigma]^{W_0\bullet_{\rho_\ad}},
\]
where $W_0\bullet_{\rho_\ad}$ denotes the action given in \eqref{E: twist W-action} (with $\la=\rho_\ad$). Indeed, this follows from the usual Satake isomorphism by noticing that $\eqref{E: Sat tran}$ differs from the usual Satake transform (e.g. see \cite[(3.4)]{Gr} in the split case) by a square root of the modular character.
\end{proof}

\begin{rmk}
The above isomorphism might look artificial as we identify both sides with a subring of $\bZ[\xch(\hat T)^\sigma]$. In the next section, we will deduce this isomorphism from the geometric Satake\footnote{In fact, this is how the formulation given here was first discovered.}. This alternative approach has the advantage that it is more natural and is independent of the usual Satake isomorphism, and is useful for some arithmetic applications.
\end{rmk}

Let us explain the relation of $\Sat^\cl$ with the classical Satake isomorphism (e.g see \cite[Proposition 3.6]{Gr} in the split case) and the mod  $p$ Satake isomorphism as in \cite{He,HV}. 

First by \eqref{E: group unit in Vrho}, $(V_{\hat{G},\rho_\ad}|_{\tilde d_{\rho_\ad}=(q,\sigma)})_{\bZ[q^{-1}]}\cong ({^C}G|_{d_{\rho_\ad}=(q,\sigma)})_{\bZ[q^{-1}]}$, so the above isomorphism induces a canonical isomorphism
\[
   H_G\otimes\bZ[q^{-1}]\cong \bZ[q^{-1}][{^C}G|_{d_{\rho_\ad}=(q,\sigma)}]^{\hat{G}}.
\]
After choosing a square root $q^{1/2}$, there is a $\hat{G}$-equivariant isomorphism (comparing with \eqref{E: twisting})
\[
 {^C}G|_{d_{\rho_\ad}=(q,\sigma)}\cong \hat{G}\sigma,\quad (g,(q,\sigma))\in \hat{G}\rtimes(\bG_m\times\langle\sigma\rangle)\mapsto g2\rho(q^{-\frac{1}{2}})\sigma \in {^L}G. 
\]
Note that the following diagram is commutative
\[\xymatrix{
 \bZ[q^{\pm\frac{1}{2}}][{^C}G|_{d_{\rho_\ad}=(q,\sigma)}]^{\hat G}\ar^-\cong[rr]\ar_-{\eqref{eq: inj inv function on VT}\circ\Res}[d] && \bZ[q^{\pm\frac{1}{2}}][\hat G\sigma]^{\hat G}\ar^{\Res}[d] \\
  \bZ[q^{\pm\frac{1}{2}}][\xch(\hat T)^\sigma] \ar^{e^{\hat \la}\mapsto (q^{-\frac{1}{2}})^{(2\rho,\hat\la)}e^{\hat\la}}[rr]&&  \bZ[q^{\pm\frac{1}{2}}][\xch(\hat T)^\sigma],
}\]
where the right vertical map is the restriction map from functions on $\hat{G}\sigma$ to function on $\hat{T}\sigma$.
The composition of \eqref{E: Sat tran} with the bottom map in the above diagram is the usual Satake transform.
We thus obtain the usual classical Satake isomorphism 
\[
  H_G\otimes\bZ[q^{\pm \frac{1}{2}}]\cong \bZ[q^{\pm \frac{1}{2}}][\hat{G}\sigma]^{\hat{G}}.
\]

On the other hand, after mod $p$,
\eqref{E: Sat tran} is the formula used in \cite{He, HV} to define the mod $p$ Satake isomorphism. 
In addition, $\bF_p[V_{\hat{G}}|_{d=\rho_\ad(q)}]=\bF_p[\on{As}_{\hat{G}}]$.
\begin{cor}
There is a canonical isomorphism $H_G\otimes\bF_p\cong \bF_p[\on{As}_{\hat{G}}]^{c_\sigma(\hat{G})}$.
\end{cor}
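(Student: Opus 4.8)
The plan is to combine the already-established isomorphism $\Sat^{\cl}$ of Proposition \ref{prop: main} with the specialization of the Vinberg monoid at $d = 0$. Recall from \S\ref{SS: Aff Mon} that $d^{-1}(0) \cong \on{As}_{\hat{G}}$ as $(\hat{G}\times\hat{G})$-schemes; since $\rho_\ad$ is a dominant coweight of $\hat{G}_\ad$, the monoid homomorphism $\rho_\ad^+:\bA^1\to\hat{T}_\ad^+$ sends $0\mapsto 0$, so that the fiber $V_{\hat{G},\rho_\ad}|_{d_{\rho_\ad}=0} = \bA^1\times_{\rho_\ad^+,\hat{T}_\ad}V_{\hat{G}}$ over $0\in\bA^1$ is identified with $V_{\hat{G}}|_{d=0} = \on{As}_{\hat{G}}$, compatibly with the $\hat{G}$-actions (by left and right translation, hence with the $\sigma$-twisted conjugation). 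First I would make this identification of fibers precise, observing that for the purposes of mod $p$ reduction the point $q = q\cdot 1 \in \bF_q$ reduces to $0$ since $q$ is a power of the residue characteristic $p$; more precisely, $\rho_\ad(q) \equiv \rho_\ad(0)$ after base change to $\bF_p$ because $\bar{e}^{\hat\al_i}\mapsto q^{(\rho_\ad,\hat\al_i)}$ becomes the zero map modulo $p$ (each exponent $(\rho_\ad,\hat\al_i)$ is a positive integer, so $q^{(\rho_\ad,\hat\al_i)}\equiv 0\bmod p$).

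Next, using the Chevalley restriction isomorphism \eqref{E: Chev res q} together with the fact that forming invariants and tensoring commute appropriately (the relevant $\on{fil}_{\hat\nu}\bZ[\hat G]$ and their invariant subspaces are finite free over $\bZ$, so base change to $\bF_p$ is exact here), I would deduce $\bF_p[V_{\hat{G}}|_{d=\rho_\ad(q)}]^{c_\sigma(\hat{G})} \cong \bF_p[\on{As}_{\hat{G}}]^{c_\sigma(\hat{G})}$. This is essentially the assertion $\bF_p[V_{\hat{G}}|_{d=\rho_\ad(q)}] = \bF_p[\on{As}_{\hat{G}}]$ flagged in the text just before the corollary: modulo $p$, the structure map $\bZ[\xch(\hat T_\ad)_{\on{pos}}]\to\bF_p$ sending $\bar e^{\hat\al_i}\mapsto q^{(\rho_\ad,\hat\al_i)}$ factors through the augmentation $\bar e^{\hat\al_i}\mapsto 0$, and the associated graded $\on{gr}\bZ[\hat G] = \oplus_{\hat\nu}S_{\hat\nu^*}\otimes S_{\hat\nu}$ computes exactly the fiber at $0$.

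Finally, I would reduce the isomorphism $\Sat^{\cl}$ modulo $p$: since both sides of Proposition \ref{prop: main} are finite free $\bZ$-modules (the source because it is a subring of $\bZ[\xch(\hat T)^\sigma]$ with the explicit $\bZ$-basis of Lemma \ref{L: image Chevalley restriction}; the target $H_G$ by the structure theory of the spherical Hecke algebra, or again by that lemma via $\Sat^{\cl}$), tensoring the isomorphism with $\bF_p$ yields $\bF_p[V_{\hat{G},\rho_\ad}\rtimes\langle\sigma\rangle|_{\tilde d_{\rho_\ad}=(q,\sigma)}]^{\hat G}\otimes\bF_p \cong H_G\otimes\bF_p$. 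Combining with the fiber identification over $0$ and the remark that $\bZ[V_{\hat G}\rtimes\langle\sigma\rangle]^{\hat G}$ equals $\bZ[V_{\hat G}]^{c_\sigma(\hat G)}$ (via the twisted-conjugation reinterpretation recorded in \S\ref{SS: Aff Mon}), we obtain the canonical isomorphism $H_G\otimes\bF_p \cong \bF_p[\on{As}_{\hat{G}}]^{c_\sigma(\hat{G})}$, and one checks it is compatible with $\on{CT}^{\cl}$ mod $p$, which is exactly the mod $p$ Satake transform of \cite{He, HV}. The main obstacle I anticipate is the commutation of $\hat{G}$-invariants with the base change $\bZ\to\bF_p$: one needs that $\bZ[V_{\hat G}|_{d=\rho_\ad(q)}]^{c_\sigma(\hat G)}$ is flat (indeed free) over $\bZ$ and that its formation commutes with this specialization — this is where the finite-freeness of the filtered pieces $\on{fil}_{\hat\nu}\bZ[\hat G]$ and the explicit basis from Lemma \ref{L: image Chevalley restriction} do the essential work, avoiding any appeal to vanishing of higher cohomology of $\hat{G}$ in positive characteristic.
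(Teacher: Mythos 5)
Your proposal is correct and follows essentially the same route as the paper, which deduces the corollary directly from Proposition \ref{prop: main} together with the observation that $\bF_p[V_{\hat{G}}|_{d=\rho_\ad(q)}]=\bF_p[\on{As}_{\hat{G}}]$ (since $q^{(\rho_\ad,\hat\al_i)}\equiv 0\bmod p$, the specialization $\bar e^{\hat\al_i}\mapsto q^{(\rho_\ad,\hat\al_i)}$ factors through the augmentation). Your additional care about commuting $c_\sigma(\hat G)$-invariants with the base change $\bZ\to\bF_p$, via the freeness coming from \eqref{E: Chev res q} and Lemma \ref{L: image Chevalley restriction}, is exactly the content the paper leaves implicit.
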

This gives a natural description of the mod $p$ Hecke algebra by Langlands duality. Note that by definition
\[
   \bF_p[\on{As}_{\hat{G}}]^{c_\sigma(\hat{G})}= \bigoplus_{\nu\in\xch(\hat{T})^+} (S_{\nu^*}\otimes S_{\nu})^{c_\sigma(\hat{G})}\cong \bF_p[\xcoch(T)^{\sigma,-}].
\]
Therefore, we recover the mod $p$ Satake isomorphism \cite{He, HV} (for trivial $V$ in \emph{loc. cit.}).

\begin{ex}
Let $G=\on{PGL}_2$, so that ${^C}G=\hat{G}^T=\GL_2$ and $V_{\hat{G},\rho_\ad}=V_{\hat{G}}=M_2$ is the monoid of $2\times 2$-matrices, and $d=\det: M_2\to \bA^1$ is the usual determinant map. Then $\bZ[M_2|_{\det=q}]^{\SL_2}\cong \bZ[\tr]$ is the polynomial ring generated by the trace function. On the other hand, $H_G=\bZ[T_p]$ is a polynomial ring generated by the $T_p$-operator. Under the canonical Satake isomorphism $T_p$ matches $\tr$.
\end{ex}

%\begin{ex}
%Let $G=\GL_n$ so ${^C}G=\hat{G}^T\cong\GL_2\times\bG_m$ and $V_{\hat{G},\rho_\ad}$
%Under this isomorphism, the category $\on{Rep}(\hat{G}^T)^+$ is equivalent to the category of representations of $\GL_2\times\bG_m$ on which $\bG_m$ acts via positive weights. It follows that
%$\hat{G}^{T,+}$ is identified with $\GL_2\times \bA^1$.
%\end{ex}
%To end this section with some remarks.
\begin{rmk}
%(1) Let $H$ be an integral cohomology group of a Shimura variety (e.g. $H=H^1(X_0(N),\bZ)$), and $\mathbb T\subset \End_{\bZ_p} H$ the global Hecke algebra  (i.e. the $\bZ_p$-subalgebra generated by unramified Hecke operators away from $p$). It is known that Hecke operators at $p$ are usually not contained in $\mathbb T$, and therefore are not contained in the Galois deformation ring $R$. Proposition \ref{prop: main} is consistent with this phenomenon. 

(1) Proposition \ref{prop: main} is compatible with the Weil restriction of $G$ along unramified extensions. We leave the verification as an exercise.
%Assume that $F'/F$ is an unramified extension and $H$ is an unramified group over $F'$. Let $G=\Res_{F'/F}H$. Then $\hat{G}=\prod_{F'\to \overline{F}}\hat{H}$. As in the discussions in \cite[\S 4.1]{XZ2}, one sees that
%\[
%   [V_{\hat{H}}|_{d=\rho_H(q)}]^{c_\sigma(\hat H)}\cong \bZ[V_{\hat{G}}|_{d=\rho_{\hat{G}}(q^{[F':F]})}]^{c_\tau(\hat G)}
%\]

(2) As suggested by Bernstein,  it is the $C$-group rather than the $L$-group that should be used in the formulation of the Langlands functoriality. Similarly, we expect the Vinberg monoid of $\hat{G}$ might be useful to formulate the more subtle arithmetic aspect of the Langlands functoriality.
\end{rmk}

%The Satake isomorphism is compatible with Weil restriction of scalars.
%Assume that $\mO/\mO_0$ is a finite \'etale extension. 
%Let $q$ (resp. $q_0$) be the cardinality of the residue field of $F$ (resp. $F_0$). Let $H$ be a reductive group over $\mO$ and $G$ be its Weil restriction to $\mO_0$. Then $\hat{G}=\prod_{F\to \overline{F_0}}\hat{H}$.

\subsection{Satake isomorphism for the Hecke algebra of weight $V$}\label{S: padic integral}
%In this subsection, we assume that $F/\bQ_p$ is finite.
We retain notations from \S \ref{SS: statement}.
Let $\La$ be a $\bZ$-algebra, and let $(V,\pi)$ be a $\La$-module equipped with a $\La$-linear action of $K=G(\mO)$.

We first briefly recall the general formalism of Hecke algebra $H_G(V)$ of ``weight" $V$ and the Satake transform. We refer to \cite{HV} for a more general treatment in a more abstract setting. First, we define
\begin{equation}\label{E: Hk alg wt}
  H_G(V):= \{f: G(F)\to \End_{\La}(V)\mid f(k'gk)(v)=k'(f(g)(kv)), \forall k,k'\in K,\ \ \ \on{Supp}(f) \mbox{ is compact}\},
\end{equation}
with the ring structure given by convolution
\[
   (f_1\star f_2)(g)(v)=\sum_{h\in G/K}f_1(h)(f_2(h^{-1}g) (v)),\quad g\in G(F), v\in V.
\]
\begin{ex}
Let $G=T$ be a torus over $\mO$. Let $\La=\mO_L$, where $L$ is a non-archimedean local field over $F$. Let $V$ be the rank one free module over $\La$ on which $T(\mO)$ acts through a continuous character $\chi: T(\mO)\to \mO_L^\times$. In this case, we write $H_T(V)$ as $H_T(\chi)$. There is an isomorphism
\begin{equation}\label{E: Sat wt tori}
   H_T(\chi)\cong \mO_L[\xch(\hat T)^\sigma],  \quad f\mapsto \sum_{\hat\la\in\xch(T)^\sigma}   f(\hat\la(\varpi))e^{\hat\la}.
\end{equation}
If $\chi$ is non-trivial, this isomorphism depends on the choice of a uniformizer $\varpi\in F$. 
\end{ex}

Similar to \eqref{E: Sat tran}, there is the Satake transform
\begin{equation}\label{E: Sat tran weight}
  \on{CT}_V^{\cl}: H_G(V)\to  H_T(V^{U(\mO)}), \quad f\mapsto \Big(\on{CT}_V^{\cl}(f): t\in T(F)\mapsto \sum_{u\in U(F)/U(\mO)} f(tu)|_{V^{U(\mO)}}\Big).
\end{equation}
To justify the definition, note that the sum $\sum_{u\in U(F)/U(\mO)} f(tu)|_{V^{U(\mO)}}$ is finite and that
 for $v\in V^{U(\mO)}$, 
$\sum_{u\in U(F)/U(\mO)} f(tu)(v)\in V^{U(\mO)}$. In addition, one verifies that \eqref{E: Sat tran weight} is a homomorphism, either
by a direct computation (e.g. see Step 2 in the proof of \cite[Theorem 1.2]{He}), or by considering the action of $H_G(V)$ on the ``principal series representation of weight $V$" (e.g. see \cite[Section 2]{HV}).
%\[
%   C_c((K,V)\backslash G(F)/U(F))=\{f: G(F)/U(F)\to V\mid f \mbox{ is compactly supported, and }  f(kg)=\pi(k)f(g)\},
%\]
By virtue of \eqref{E: Sat wt tori},
 \eqref{E: Sat tran weight} specializes to \eqref{E: Sat tran} when $V=\mathbf{1}$ is the trivial representation.
\begin{rmk}
%\begin{enumerate}
In most literature, $V$ is assumed to be a smooth $K$-module, i.e. the stabilizer of every element $v\in V$ in $K$ is open. But this assumption is in fact not necessary in the above discussions.
%Assume that $V$ is a finite free $\mO_K$-module, where $\mO_K$ the ring of integers of a non-archimedean local field $K$. 
%Let $\widehat{H}_G(V)$ be the $p$-adic completion of $H_G(V)$. Then $\widehat{H}_G(V)[1/p]$ is the Banach-Hecke algebra considered in \cite{ST}.
%\end{enumerate}
\end{rmk}
\quash{with the usual $G(F)$ action given by $(g\cdot f)(g')=f(g^{-1}g')$. For $v\in V$, let $f_v\in \ind_K^{G(F)}V$ be the function supported on $K$ given by $f_v(k)=k^{-1}v$.
There is a natural ring isomorphism
\[
\End_{G(F)}(\ind_K^{G(F)}V)\cong H(G,K,V)
\]
sending $\phi: \ind_K^{G(F)}V\to\ind_K^{G(F)}V$ to $f_\phi: G(F)\to \End_{\mO}(V)$, where
\[
    f_\phi(g)(v)= (g\phi(f_v))(1).
\]
Under this bijection, compositions in $\End_{G(F)}(\ind_K^{G(F)}V)$ become convolutions of $\{f: G(F)\to \End_{\mO}(V)\mid f(k'gk)(v)=k'(f(g)(k^{-1}v))\}$.
}

In the above generality, there is very little one can say about \eqref{E: Sat tran weight}.
In the sequel, %we assume that the derived group of $G$ is simply-connected. In addition, 
we specialize $V$ to the following situation. %Assume that $F/\bQ_p$ is a finite extension, and $q$ is the cardinality of the residue field of $F$. Let $\varpi$ be a uniformizer of $F$. 
Let $L$ be a non-archimedean local field over $F$, with $\mO_L$ its ring of integers.
Let
$V$ be a finite free $\mO_L$-module arising from an algebraic representation of $G$ over $\mO_L$, such that $V^{U(\mO)}$ is free of rank one. 
This is the case if and only if $\dim_LV_L^{U_L}=1$. In this case, $T(\mO)=B(\mO)/U(\mO)$ acts on $V^{U(\mO)}$ via a dominant weight $\la$ of $T$. It follows from \eqref{E: Sat wt tori} that $H_T(V^{U(\mO)})=H_T(\la)$ is commutative. 
\begin{lem}\label{L: inj Sat} 
Under the above assumption, the map $\on{CT}^{\cl}_{V}$ is injective.
\end{lem}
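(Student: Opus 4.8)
The plan is to show injectivity of $\on{CT}^{\cl}_V$ by a standard support-leading-term argument, exploiting that $V$ comes from an algebraic representation of highest weight $\la$ with one-dimensional $U(\mO)$-invariants.

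First I would use the Cartan decomposition $G(F)=\bigsqcup_{\mu\in\xcoch(T)^{\sigma,+}} K\mu(\varpi)K$ to organize things. Since $f\in H_G(V)$ is bi-$K$-equivariant with compact support, it is determined by the finitely many values $f(\mu(\varpi))\in \End_\La(V)$, each of which must intertwine the action of $K\cap \mu(\varpi)K\mu(\varpi)^{-1}$ appropriately; more precisely the relevant data is $\on{Hom}_{K\cap{}^{\mu(\varpi)}K}(V,V)$. I would introduce a partial order on the support: say $\mu$ is a \emph{top} coweight of $f$ if $f(\mu(\varpi))\neq 0$ and $\mu$ is maximal (for $\preceq$, or a suitable dominance order) among such. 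The key computation is then to evaluate $\on{CT}^{\cl}_V(f)$ on a torus element $\nu(\varpi)$ with $\nu$ in the Weyl orbit of a top coweight $\mu$ and pick out the ``leading'' matrix coefficient against the highest weight line $V^{U(\mO)}$: the Iwasawa-type decomposition of $K\mu(\varpi)K$ relative to $B=TU$ shows that $\sum_{u\in U(F)/U(\mO)} f(\mu(\varpi)u)|_{V^{U(\mO)}}$ has a nonzero contribution coming from the single coset $u\in U(\mO)$ (or from the ``big cell'' piece), while all other coweights $\mu'$ in the support contribute to strictly lower torus characters because $\mu'\prec\mu$. Here the hypothesis that $V^{U(\mO)}$ is \emph{free of rank one} is crucial: it forces $T(\mO)$ to act on it by the single character $\la$, so there is no cancellation among weight spaces and the leading term is genuinely $f(\mu(\varpi))$ restricted to a rank-one module, hence nonzero as an element of $H_T(\la)\cong \mO_L[\xch(\hat T)^\sigma]$.

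Concretely, I would argue: suppose $\on{CT}^{\cl}_V(f)=0$ but $f\neq 0$; choose a maximal $\mu$ in the support; examine the coefficient of $e^{\mu}$ (equivalently, evaluate at $\mu(\varpi)$) in $\on{CT}^{\cl}_V(f)$. Every $\mu'(\varpi)u$ with $u\in U(F)$ lying in $K\mu(\varpi)K$ forces $\mu'\preceq\mu$ by the standard compatibility of the Cartan and Iwasawa decompositions (the Iwasawa coweight is $\preceq$ the Cartan coweight), and equality $\mu'=\mu$ occurs only for $u\in U(\mO)$, contributing exactly $f(\mu(\varpi))|_{V^{U(\mO)}}$. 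Contributions from smaller $\mu''$ in the support of $f$ land in strictly lower characters and cannot cancel the $e^\mu$-term. Hence the $e^\mu$-coefficient of $\on{CT}^{\cl}_V(f)$ equals $f(\mu(\varpi))|_{V^{U(\mO)}}$, which is nonzero because $f(\mu(\varpi))\neq 0$ and, by Frobenius reciprocity / the rank-one hypothesis, a nonzero element of $\on{Hom}_{K\cap{}^{\mu(\varpi)}K}(V,V)$ cannot vanish on the highest-weight line $V^{U(\mO)}$ (the highest weight line generates $V$ over the relevant parahoric, so vanishing on it would force the whole map to be zero). This contradicts $\on{CT}^{\cl}_V(f)=0$.

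The main obstacle I anticipate is the last point: verifying that the leading matrix coefficient $f(\mu(\varpi))|_{V^{U(\mO)}}$ is actually nonzero, rather than merely that $f(\mu(\varpi))\neq 0$. This requires understanding which elements of $\End_\La(V)$ can occur as $f(\mu(\varpi))$ — they must commute with $V^{U(\mO)\cap{}^{\mu(\varpi)}U(\mO)}$-type invariance constraints coming from $K\cap{}^{\mu(\varpi)}K$ — and then using that $V$ is generated as a module over the unipotent radical (and $T(\mO)$) by its highest weight line, so an intertwiner killing that line is zero. One can either do this directly using the structure of algebraic representations over $\mO_L$ (the highest weight vector is a cyclic vector for the $U^{-}$-action, hence for a suitable compact subgroup), or cite the parallel argument in \cite[proof of Theorem 1.2, Step 2]{He} and \cite[Section 2]{HV}, adapting it from the $\bF_p$-coefficient case to general $\mO_L$. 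The other routine-but-nontrivial ingredient is the precise compatibility between Cartan and Iwasawa coweights with the correct normalization of $\preceq$; this is classical (e.g.\ it underlies the triangularity of the classical Satake transform) and I would simply invoke it.
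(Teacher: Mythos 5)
Your overall strategy (triangularity of the constant-term map via Cartan--Iwasawa compatibility, plus identification of the leading matrix coefficient) is the classical one from \cite{ST,He,HV}, and the paper explicitly says it follows that strategy; so the proposal is in the right spirit and would yield a proof. However, the paper's actual implementation is somewhat slicker and avoids re-deriving triangularity. It first proves a structural sub-lemma: for each $\hat\mu\in\xch(\hat T)^{\sigma,-}$ there is a unique $f_{\hat\mu}\in H_G(V)$ supported on $K\hat\mu(\varpi)K$ with $f_{\hat\mu}(\hat\mu(\varpi))|_{V^{U(\mO)}}=\id$, and the $\{f_{\hat\mu}\}$ form an $\mO_L$-basis. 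The key point there is exactly the one you flag as the ``main obstacle'': since $K\cap\hat\mu(\varpi)^{-1}K\hat\mu(\varpi)$ is Zariski dense in $G(F)$, the intertwining condition forces $f(\hat\mu(\varpi))_L = c\,\pi(\hat\mu(\varpi))$ for some scalar $c\in L$, and integrality pins down $c$ up to $\mO_L^\times\cdot\varpi^{\langle\la,-\hat\mu\rangle}$. Then the paper constructs an explicit isomorphism $H_G(\mathbf 1)\otimes L\xrightarrow{\sim} H_G(V)\otimes L$ (sending $1_{K\hat\mu(\varpi)K}\mapsto\varpi^{\langle\la,\hat\mu\rangle}f_{\hat\mu}$, i.e.\ twisting by $\pi$) that intertwines $\on{CT}^{\cl}_{\mathbf 1}\otimes L$ and $\on{CT}^{\cl}_V\otimes L$, and concludes by quoting the known injectivity for trivial weight. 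So rather than reproving the triangular structure of $\on{CT}^{\cl}_V$, the paper reduces to the $V=\mathbf 1$ case after inverting $\varpi$. Both routes work; yours is more self-contained, the paper's is shorter given that injectivity for $\mathbf 1$ is already known.

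One point in your argument needs a correction. You justify the nonvanishing of $f(\mu(\varpi))|_{V^{U(\mO)}}$ by saying the highest weight line ``generates $V$ over the relevant parahoric'' so an intertwiner killing it must vanish. Over $\mO_L$ this cyclic-vector claim is not true in general: the $\mO_L$-span of $U^-(\mO_L)\cdot V^{U(\mO)}$ can be a proper sublattice of $V$ (e.g.\ for $\SL_2$ with $V=\Sym^2(\mathrm{std})$ and $p=2$). What actually works is the argument you list as the alternative, and which the paper uses: pass to $V_L$, use Zariski density of $K\cap\hat\mu(\varpi)^{-1}K\hat\mu(\varpi)$ in $G$ to upgrade the integral intertwining condition to an algebraic one over $L$, and apply Schur's lemma to see $\varphi_L$ is a scalar multiple of $\pi(\hat\mu(\varpi))$; a nonzero scalar cannot vanish on any nonzero subspace. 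You should lead with this version rather than the lattice-generation heuristic.
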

\begin{proof}
The proof given below follows the same strategy in \cite{ST,He,HV}. First,
\quash{
\begin{ex}
Let $G=\GL_n$ over $\mO$.
\end{ex}

To see that this is a ring homomorphism, we use the usual strategy to consider the action of $H_G(V)$ on the space
\[
    C( (K,V)\backslash G(F)/U(F)):=\{\xi: G(F)/U(F)\to V \mid \xi(kg)=k \xi(g), \}
\]
by convolution
\[
    (f\star\xi)(g)= \sum_{h\in G/K}f(h)(\xi(h^{-1}g)),\quad  g\in G(F)/U(F).
\]
Note that $\xi(\hat\la(\varpi))\in V^{\hat\la(\varpi) U(\mO) \hat\la(\varpi)^{-1}\cap U(\mO)} = V^{U(\mO)}$. Then
$\xi$ is uniquely determined by $\{\xi(\hat\la(\varpi))\}_{\hat\la\in\xch(\hat T)^\sigma}$. For each $\la$, we have a unique $\xi_\la$, such that $\xi_\la(\hat\la(\varpi))=\varpi^{\langle\la,-\hat\la\rangle}v$. Then the action of $H_{G}(V)$ preserves each $\xi_\la$, and
\[
   (f\star\xi_\la)(1)=\la(\on{CT}^{\cl}(f)) v,
\]
where $\la: \mO[\xch(\hat T)^\sigma]\to \mO$ sends $e^{\hat\la}$ to $\varpi^{\langle\la,\hat\la\rangle}$.

It follows that the Satake transform \eqref{E: Sat tran weight} is an algebra homomorphism.

}

\begin{lem}\label{L: base wt Hecke}
Fix a uniformizer $\varpi\in F$. For every $\hat\mu\in\xch(\hat T)^{\sigma,-}$,
There is a unique element $f_{\hat\mu}\in H_G(V)$ satisfying 
\begin{itemize}
\item $f_{\hat\mu}$ is supported on $K\hat\mu(\varpi)K$;
\item $f_{\hat\mu}(\hat\mu(\varpi))|_{V^{U(\mO)}}=\id: V^{U(\mO)}\to V^{U(\mO)}$.
\end{itemize}
In addition,
%\begin{enumerate}
%\item $f_{\hat\mu}(w_0(\hat\mu)(\varpi))(v)=\varpi^{\langle\la, w_0(\hat\mu)-\hat\mu\rangle}v$, where $w_0$ is the longest length element in the Weyl group.
%\item 
The collection $\{f_{\hat\mu}\}_{\hat\mu\in\xch(\hat T)^{\sigma,-}}$ form an $\mO_L$-basis of $H_G(V)$.
%\end{enumerate}
\end{lem}
\begin{proof}
Clearly, restricting a map $f: G(F)\to \End_{\mO_L}(V)$ to $\hat\mu(\varpi)$ induces a bijection between elements in $H_{G}(V)$ satisfying the first condition and $\mO_L$-linear maps $\varphi:V\to V$ satisfying 
\begin{equation}\label{E: equiv stable}
   \pi(\hat{\mu}(\varpi)k\hat{\mu}(\varpi)^{-1}) \varphi(w)= \varphi(\pi(k)w), \quad \forall k\in K\cap \hat{\mu}(\varpi)^{-1} K \hat{\mu}(\varpi), w\in V.
\end{equation}
As $K\cap \hat{\mu}(\varpi)^{-1} K \hat{\mu}(\varpi)$ is Zariski dense in $G(F)$, the rational map $\varphi_L: V_L\to V_L$ must be equal to $c\pi(\hat{\mu}(\varpi))$ for some $c\in L$. Then $\varphi$ preserves the integral lattice $V$ if and only if
$c\in \varpi^{\langle\la,-\hat\mu\rangle}\mO_L$. 

It follows from the above considerations that
$f_{\hat\mu}(\hat\mu(\varpi))= \varpi^{\langle\la,\hat\mu\rangle}\pi(\hat\mu(\varpi))$ is the desired element as in the lemma. In addition %$f_{\hat\mu}(w_0(\hat\mu)(\varpi))(v)=\varpi^{\langle\la,w_0(\hat\mu)-\hat\mu\rangle}v$, and 
$\{f_{\hat\mu}\}_{\hat\mu\in\xch(\hat T)^{\sigma,-}}$ form an $\mO_L$-basis of $H_G(K)$.
\end{proof}

Note that the natural map $H_G(V)\otimes L\to H_G(V_L)$ is an isomorphism, and 
there is the following commutative diagram
\begin{equation}\label{E: triv to gen wt}
\xymatrix{
   H_G(\mathbf{1})\otimes L \ar^{\on{CT}^{\cl}_{\mathbf{1}}}[r]\ar[d]&  H_T(\mathbf{1})\otimes L \ar[d]\\
   H_G(V)\otimes L\ar^-{\on{CT}^{\cl}_{V}}[r]&  H_T(V^{U(\mO)})\otimes L
}\end{equation}
where the left vertical map sends $f: G(F)\to L$ to $\tilde{f}: G(F)\to \End(V_L), \ g\mapsto f(g)\pi(g)$, and the right vertical map sends $f: T(F)\to L$ to $\tilde{f}: T(F)\to \End(V_L^{U_L})=L,\ t\mapsto f(t)\la(t)$. 

The left vertical map sends $1_{K\hat\mu(\varpi)K}$ to $\varpi^{\langle\la, \hat\mu\rangle}f_{\hat\mu}$, and therefore is an isomorphism. Similarly, the right vertical map is an isomorphism. As $\on{CT}^{\cl}_{\mathbf{1}}\otimes L$ is injective, we conclude Lemma \ref{L: inj Sat} .
\end{proof}

In this sequel, we further assume that $F=\bQ_p$, and choose the uniformizer $\varpi=p$. Then we write the Satake transform \eqref{E: Sat tran weight} as a homomorphism $\on{CT}^{\cl}_V: H_G(V)\to \mO_L[\xch(\hat T)^\sigma]$
using \eqref{E: Sat wt tori}.
%To relate the Satake transform with the affine monoids considered in \S \ref{SS: Aff Mon}, 

For a dominant weight $\la$ of $G$, let $\la_\ad:\bG_m\to \hat{T}\to\hat{T}_\ad$ be the corresponding cocharacter of $\hat{T}_\ad$.
\begin{prop}\label{prop: main weights}
There exists a unique isomorphism 
\[
  \Sat^{\cl}: \mO_L[V_{\hat{G}}|_{d=(\la_{\ad}+\rho_{\ad})(p)}]^{c_\sigma(\hat{G})}\xrightarrow{\cong}  H_G(
  V), 
\]  
%which we call the Satake isomorphism, 
making the following diagram commutative
\[
\xymatrix{
\mO_L[V_{\hat{G}}|_{d=(\la_\ad+\rho_\ad)(p)}]^{c_\sigma(\hat{G})} \ar_{\Res}^-\cong[d]\ar_-\cong^-{\Sat^{\cl}}[rr]&& H_G(V) \ar^-{\on{CT}^{\cl}_V}[d]\\
 \mO_L[V_{\hat{T}}|_{d=(\la_\ad+\rho_\ad)(p)}]^{c_\sigma(\hat{N}_0)}\ar@{^{(}->}^-{\eqref{eq: inj inv function on VT}}[rr] && \mO_L[\xch(\hat{T})^\sigma].
}
\]
In particular, $H_G(V)$ is finitely generated.
\end{prop}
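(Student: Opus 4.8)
The plan is to run the argument of Proposition~\ref{prop: main} one level up, converting the statement into a comparison of two explicit subrings of $\mO_L[\xch(\hat T)^\sigma]$. Uniqueness of $\Sat^\cl$ is immediate: $\on{CT}^\cl_V$ is injective by Lemma~\ref{L: inj Sat}, so it can be factored through $\Sat^\cl$ in at most one way.

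For existence, I would first base-change \eqref{E: Chev res q} to $\mO_L$ (the arguments of \cite{XZ2} work over any base) and combine it with the Chevalley restriction isomorphism $\Res$ to identify $\mO_L[V_{\hat G}|_{d=(\la_\ad+\rho_\ad)(p)}]^{c_\sigma(\hat G)}$ with $\mO_L[V_{\hat T}|_{d=(\la_\ad+\rho_\ad)(p)}]^{c_\sigma(\hat N_0)}$. By Lemma~\ref{L: image Chevalley restriction}, applied with $\la=\la_\ad+\rho_\ad$ and $q=p$, the image of the map \eqref{eq: inj inv function on VT} is the subring $R\subseteq\mO_L[\xch(\hat T)^\sigma]$ that is free over $\mO_L$ on the basis
\[
   b_{\hat\mu}:=\sum_{\hat\la'\in W_0\hat\mu}p^{\langle\la_\ad+\rho_\ad,\ \hat\la'-\hat\mu\rangle}\,e^{\hat\la'},\qquad \hat\mu\in\xch(\hat T)^{\sigma,-}.
\]
It therefore suffices to show $\on{CT}^\cl_V$ carries $H_G(V)$ isomorphically onto $R$; granting this, one sets $\Sat^\cl:=(\on{CT}^\cl_V)^{-1}\circ(\eqref{eq: inj inv function on VT}\circ\Res)$, and the diagram commutes by construction since $\on{CT}^\cl_V\circ\Sat^\cl=\eqref{eq: inj inv function on VT}\circ\Res$.

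To compute $\on{CT}^\cl_V$ I would use the $\mO_L$-basis $\{f_{\hat\mu}\}_{\hat\mu\in\xch(\hat T)^{\sigma,-}}$ of Lemma~\ref{L: base wt Hecke} and chase $1_{K\hat\mu(p)K}$ around the commutative square \eqref{E: triv to gen wt}. Its image under the left vertical arrow is $p^{\langle\la,\hat\mu\rangle}f_{\hat\mu}$; by Proposition~\ref{prop: main} together with the classical fact that $\on{CT}^\cl_{\mathbf 1}(1_{K\hat\mu(p)K})$ is supported on antidominant coweights in the convex hull of $W\hat\mu$ with coefficient $1$ at $e^{\hat\mu}$, its image under the top arrow is $b^{(0)}_{\hat\mu}+\sum_{\hat\nu\succ\hat\mu}a_{\hat\mu\hat\nu}\,b^{(0)}_{\hat\nu}$ with $a_{\hat\mu\hat\nu}\in\bZ$ and only finitely many nonzero, where $b^{(0)}$ denotes the $\la=\rho_\ad$ analogue of the basis above; and the right vertical arrow $e^{\hat\la}\mapsto p^{\langle\la,\hat\la\rangle}e^{\hat\la}$ carries $b^{(0)}_{\hat\nu}$ to $p^{\langle\la,\hat\nu\rangle}b_{\hat\nu}$ (the two systems of $p$-powers merge via $\langle\rho_\ad,\,\cdot\,\rangle+\langle\la_\ad,\,\cdot\,\rangle=\langle\la_\ad+\rho_\ad,\,\cdot\,\rangle$ on the root lattice, using $\langle\la,\eta\rangle=\langle\la_\ad,\eta\rangle$ for $\eta\in\xch(\hat T_\ad)$). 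Cancelling $p^{\langle\la,\hat\mu\rangle}$ gives
\[
   \on{CT}^\cl_V(f_{\hat\mu})=b_{\hat\mu}+\sum_{\hat\nu\succ\hat\mu}a_{\hat\mu\hat\nu}\,p^{\langle\la,\,\hat\nu-\hat\mu\rangle}\,b_{\hat\nu},
\]
a finite sum. Since $\la$ is dominant and $\hat\nu-\hat\mu\in\xch(\hat T_\ad)_{\on{pos}}$ for $\hat\nu\succeq\hat\mu$, every exponent $\langle\la,\hat\nu-\hat\mu\rangle$ is $\geq 0$, so all coefficients lie in $\mO_L$. Hence $\on{CT}^\cl_V$ is, in the bases $\{f_{\hat\mu}\}$ and $\{b_{\hat\mu}\}$, unitriangular for $\preceq$ over $\mO_L$, and the inverse transition matrix has entries in $\mO_L$ as well (the downward induction defining it terminates because for fixed $\hat\mu$ only the finitely many coweights in $\mathrm{conv}(W\hat\mu)$ ever occur). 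Therefore $\on{CT}^\cl_V\colon H_G(V)\xrightarrow{\cong}R$, which finishes the construction. Finite generation of $H_G(V)$ then follows as in Proposition~\ref{prop: main}: via \eqref{E: Chev res q} it is isomorphic to $\mO_L[V_{\hat G}]^{c_\sigma(\hat G)}\otimes_{\mO_L[\xch(\hat T_\ad)_{\on{pos}}]}\mO_L$, and $\mO_L[V_{\hat G}]^{c_\sigma(\hat G)}$ is a finitely generated $\mO_L$-algebra by \cite{XZ2}.

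The main obstacle is the middle step: extracting from classical Satake the precise triangular shape of $\on{CT}^\cl_{\mathbf 1}(1_{K\hat\mu(p)K})$ — leading term $b^{(0)}_{\hat\mu}$, all other terms supported in $\mathrm{conv}(W\hat\mu)$ — and then checking that, after twisting by the weight $\la$, the accumulated powers of $p$ come out \emph{non-negative}. This positivity, valid precisely because $\la$ is dominant, is what both produces the shift $\rho_\ad\rightsquigarrow\la_\ad+\rho_\ad$ in the monoid and upgrades the comparison from an isomorphism over $L$ to one over $\mO_L$; without it one would only obtain $H_G(V)\otimes L\cong R\otimes L$.
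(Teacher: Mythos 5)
Your argument is correct and is, in substance, the paper's own argument. The paper's proof is extremely terse: it reduces the statement to the claim $\on{CT}_V^{\cl}\colon H_G(V)\cong \mO_L[\xch(\hat T)^\sigma]\cap L[\xch(\hat T)^\sigma]^{W_0\bullet_{\la_\ad+\rho_\ad}}$ and then simply asserts that "this follows from the case $V=\mathbf{1}$ and the commutative diagram \eqref{E: triv to gen wt}". What you have written out — chasing $1_{K\hat\mu(p)K}$ around \eqref{E: triv to gen wt}, matching the two systems of $p$-powers via $\langle\la,\cdot\rangle=\langle\la_\ad,\cdot\rangle$ on the root lattice, and then observing that dominance of $\la$ makes the resulting triangular transition matrix between $\{f_{\hat\mu}\}$ and $\{b_{\hat\mu}\}$ unipotent over $\mO_L$ rather than merely over $L$ — is precisely the content hidden behind that sentence. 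This positivity step is genuinely necessary: the vertical maps in \eqref{E: triv to gen wt} are isomorphisms only over $L$, so without the observation that $\langle\la,\hat\nu-\hat\mu\rangle\geq 0$ one would obtain only $H_G(V)\otimes L\cong R\otimes L$, not the integral statement. Your write-up is a faithful and useful unpacking of the paper's proof rather than an alternative route.
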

\begin{proof}As in the proof of Proposition \ref{prop: main}, it is enough to prove
\[\on{CT}_V^{\cl}: H_G(V)\cong \mO_L[\xch(\hat T)^\sigma]\cap L[\xch(\hat T)^\sigma]^{W_0\bullet_{\la_\ad+\rho_\ad}}.\]
But this follows from the case $V=\mathbf{1}$ and the commutative diagram \eqref{E: triv to gen wt}.
\end{proof}
\quash{
On the other hand, the right vertical map intertwines the $W_0$-action $\bullet_{\rho}$ and $\bullet_{\la+\rho}$. It follows that the the image of $\on{CT}^{\cl}_V$ is invariant under $\bullet_{\la+\rho}$-action by $W_0$.

\begin{rmk}
The formulation of Hecke algebra \eqref{E: Hk alg wt} does not commute with base change in general. Namely, if $\La\to \La'$ is a ring homomorphism, and $V$ is a $\La[K]$-module, the natural map $H_G(V)\otimes \La'\to H_G(V\otimes\La')$ may not be an isomorphism. But in our situation, clearly $H_G(V)\otimes L\to H_G(V\otimes L)$ is an isomorphism. In addition, when the highest weight $\la$ is ``small" and $V$ is the corresponding Schur module, then $H_G(V)\otimes \kappa_L\to H_G(V\otimes\kappa_L)$ is also an isomorphism, where $\kappa_L$ is the residue field of $\mO_L$,as explained in \cite[Prop. 2.10]{He}. 
\end{rmk}
}

\begin{rmk}
(1) It follows from \eqref{E: Chev res q} that the algebra $\mO_L[V_{\hat{G}}]^{c_\sigma(\hat{G})}$ specializes all $H_G(V)$'s.

(2) Taking the $p$-adic completion and inverting $p$ allows us to recover some results of \cite{ST} on the Banach-Hecke algebra $\widehat{H}_G(V)[1/p]$.

(3) Again, the way to identify $H_G(V)$ with $\mO[V_{\hat{G}}|_{d=(\la_\ad+\rho_\ad)(p)}]^{c_\sigma(\hat{G})}$ given above might look artificial. It would be interesting to have a geometric version of Proposition \ref{prop: main weights}.
\end{rmk}

\quash{

\begin{prop}\label{prop: main weights}
There exists a unique isomorphism 
\[
  \Sat^{\cl}: \bZ_p[V_{\hat{G},\la+\rho_{\ad}}\rtimes\langle\sigma\rangle|_{\tilde d_{\la+\rho_\ad}=(p,\sigma)}]^{\hat{G}}\xrightarrow{\cong}  H(G,K,V), 
\]  
which we call the Satake isomorphism, making the following 
diagram commutative
\[
\xymatrix{
\bZ[V_{\hat{G}}|_{d=(\la+\rho_\ad)(p)}]^{c_\sigma(\hat{G})} \ar_{\Res}^-\cong[d]\ar@{=}[r]&  \bZ[V_{\hat{G},\rho_{\ad}}\rtimes\langle\sigma\rangle|_{\tilde d_{\rho_\ad}=(q,\sigma)}]^{\hat{G}}\ar_-\cong^-{\Sat^{\cl}}[r]& H_G \ar^-{\on{CT}^{\cl}}[d]\\
 \bZ[V_{\hat{T}}|_{d=(\la+\rho_\ad)(p)}]^{c_\sigma(\hat{N}_0)}\ar@{^{(}->}^-{\eqref{eq: inj inv function on VT}}[rr] && \bZ[\xch(\hat{T})^\sigma].
}
\]
In particular, $H(G,K,V)$ is finitely generated.
\end{prop}
}

%\begin{lem}
%$K(\on{Rep}^{\on{int}}({^C}G_E))\otimes_{K(\on{Rep}^{\on{int}}(\bG_m\times \langle\sigma\rangle))}\bZ\cong \bZ[V_{\hat{G},\rho_\ad}\rtimes\langle\sigma\rangle|_{\tilde d_{\rho_\ad}=(q,\sigma)}]^{\hat G}$.
%\end{lem}

%\subsection{Split case}
%\begin{lem}
%Taking the trace of (geometric) Frobenius induces a surjective $\bZ$-algebra homomorphism
%\[\tr: K(\Sat_G^{T,+})\to H_G,\quad [\IC_\mu(-i)]\mapsto f_{\IC_\mu(-i)},\]
%with the kernel being the principal ideal generated by $v-q$.
%\end{lem}
%\begin{proof}
%Indeed, for $i\geq 0$,
%\begin{equation}\label{E: tr function}
 %  f_{\IC_\mu(-i)}=q^i(1_{K\mu K}+ \sum_{\mu'<\mu}a_{\mu\mu'}1_{K\mu' K}),\quad a_{\mu\mu'}\in\bZ,
%\end{equation}
%where $1_{K\mu' K}$ denotes the characteristic function on the $(K\times K)$-coset corresponding to the coweight $\mu'\in \xcoch(T)^+$ and ``$<$" denotes the Bruhat order.

%The kernel of the map is generated by the ideal generated $v-q$. Indeed, clearly $v-q$ is in the kernel. On the other hand, as the collection $\{[\IC_\mu(-i)], \nu\in\xcoch(T)^+, i\in \bZ_{\geq 0}\}$ form a basis of $K(\Sat_G^{T,+})$, and $\{1_{K\mu K}, \mu\in\xcoch(T)^+\}$ form a basis of $H_G$, \eqref{E: tr function} implies that the kernel is the ideal generated by $v-q$.
%\end{proof}

%Combining with Lemma \ref{L: coor ring}, we thus obtain a Satake isomorphism over $\bZ$.

%\begin{prop}
%There is a canonical isomorphism of $\bZ$-algebras $H_G\cong \bZ[V_{\hat{G},\rho_\ad}|_{d=q}]^{\hat{G}}$.
%\end{prop}

%\subsection{The non-split case}
%The non-split case is more complicated.

\section{Compatibility with the geometric Satake}
In this section, we deduce Proposition \ref{prop: main} from the geometric Satake equivalence.
\subsection{The geometric Satake equivalence}\label{SS: geom Sat}
We refer to \cite{Z17a,Z17b} and references cited there for detailed discussions of the geometric Satake equivalence. We retain notations from \S \ref{SS: statement}.
Let $\tilde{F}/F$ be the splitting field of $G$. It is a finite unramified extension of $F$, with $\tilde{\mO}$ its ring of integers and $\tilde{k}$ the residue field. Let $r=[\widetilde{F}:F]=[\tilde{k}:k]$.
Let $L^+G$ denote the positive loop group of $G$ over $k$ and let $\Gr$ denote its affine Grassmannian over $k$.

For $\hat{\mu}\in\xch(\hat{T})^+$,
let $k_{\hat{\mu}}\subset\tilde{k}$ be its field of definition and $d_{\hat{\mu}}:=[k_{\hat{\mu}}:k]$. Let $\Gr_{\leq \hat{\mu}}$ denote the Schubert variety corresponding to $\hat{\mu}$, which is a (perfect) projective scheme defined over $k_{\hat \mu}$. Let $\Gr_{\hat \mu}$ denote the open Schubert cell.

We fix $\ell\neq p$. Let $\IC_{\hat \mu}$ be the intersection complex with $\bQ_\ell$-coefficient on $\Gr_{\leq \hat \mu}$, so that 
\[
   \IC_{\hat \mu}|_{\Gr_{\hat \mu}}=\bQ_\ell[\langle2\rho,\hat\mu\rangle].
\]     
If $k'/k$ is an algebraic extension in $\bar{k}$, let
$\Sat_{G,k',\ell}$ denote the category of $L^+G\otimes k'$-equivariant perverse sheaves on $\Gr\otimes k'$ with $\bQ_\ell$-coefficients, which is a tensor abelian categories. 
Inside $\Sat_{G,\tilde{k},\ell}$, there is a full semisimple monoidal abelian subcategory $\Sat_{G,\tilde{k},\ell}^T$
as defined in \cite{Z17b}\footnote{Strictly speaking, only equal characteristic version was considered in \cite{Z17b}. However its counterpart in mixed characteristic is obvious, using \cite{Z17a}. The similar remark applies to the discussions in the sequel.}: it is the full semisimple tensor abelian category generated by all
$\{\IC_{\hat\mu}(i), \hat\mu\in \xcoch(T)^+, i\in\bZ\}$. Now we define $\Sat_{G,\ell}^T$ as the category of $\Gal(\tilde{k}/k)$-equivariant objects in $\Sat_{G,\tilde{k},\ell}^T$. 
I.e., objects are pairs $(\mF,\gamma)$, where $\mF\in \Sat_{G,\tilde{k},\ell}$ and $\gamma: \sigma^*\mF\simeq \mF$ is an isomorphism such that the induced isomorphism $\mF=(\sigma^r)^*\mF\xrightarrow{\ga\sigma(\ga)\cdots\sigma^{r-1}(\ga)} \mF$ is the identity map, and morphisms from $(\mF,\ga)$ to $(\mF',\ga')$ are morphisms from $\mF$ to $\mF'$ in $\Sat_{G,\tilde{k},\ell}^T$ that are compatible with $\ga$ and $\ga'$.
This is still a semisimple tensor category. 

For a (not necessarily connected) split reductive group $H$ over a field $E$ of characteristic zero, let  $\on{Rep}(H_E)$ denote the category of finite dimensional algebraic representations of $H$ over $E$. Let $\sigma\Mod_{\bQ_\ell}$ denote the category of representations of $\sigma$ on finite dimensional $\bQ_\ell$-vector spaces.
Here is the version of the geometric Satake equivalence we need in the sequel.
\begin{thm}\label{T: geom Sat}
There is a natural equivalence of tensor categories $\Sat: \on{Rep}({^C}G_{\bQ_\ell})\cong\Sat_{G,\ell}^T$
such that the composition with the cohomology functor $\on{H}^*(\Gr_{\bar{k}}, -): \Sat_{G,\ell}^T\to \sigma\Mod_{\bQ_\ell}$
is the restriction functor $\on{Rep}({^C}G_{\bQ_\ell})\to \sigma\Mod_{\bQ_\ell}$ 
induced by the inclusion $\sigma\mapsto (1,q,\sigma)\in \hat{G}\rtimes(\bG_m\times\langle\sigma\rangle)$.
\end{thm}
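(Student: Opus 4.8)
The plan is to establish Theorem \ref{T: geom Sat} by bootstrapping from the already-known geometric Satake equivalence over $\bar k$ (or over $\tilde k$) and then descending along $\Gal(\tilde k/k)$, tracking carefully how the pinned dual group acquires its extra $\bG_m$ and $\Gal(\tilde k/k)$ actions. First I would recall that the usual geometric Satake equivalence (in the mixed-characteristic setting via \cite{Z17a}, or the equal-characteristic one via \cite{Z17b}) identifies $\Sat_{G,\bar k,\ell}$ with $\Rep(\hat G_{\bQ_\ell})$ after fixing a fiber functor; the subtlety controlled by the $C$-group is the cohomological grading. Concretely, the total cohomology functor $\on{H}^*(\Gr_{\bar k},-)$ is a fiber functor whose target is not just $\Vect_{\bQ_\ell}$ but graded vector spaces, and the cohomological shift on $\IC_{\hat\mu}$ together with the parity of $\langle 2\rho,\hat\mu\rangle$ produces exactly the $\Ad\rho_\ad$-action of $\bG_m$; this is where the square root $\rho_\ad$ of $2\rho_\ad$ enters and where the Deligne-modified dual group $\hat G^T$ arises, matching \eqref{hat GT} and \eqref{R: another version}. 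So over $\tilde k$, passing to the subcategory $\Sat^T_{G,\tilde k,\ell}$ generated by the $\IC_{\hat\mu}(i)$ and using the cohomological $\bG_m$-action, one gets a tensor equivalence $\Sat^T_{G,\tilde k,\ell}\cong \Rep(\hat G^T_{\bQ_\ell})$, compatible with $\on{H}^*$ and the evaluation $\sigma\mapsto (1,q)$ on the $\bG_m$-factor (the ``$q$'' recording that Frobenius acts on $\on{H}^{2i}$ of the relevant strata by $q^i$, by purity/the weight structure on intersection cohomology of Schubert varieties).

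Next I would incorporate the Galois descent. Objects of $\Sat^T_{G,\ell}$ are by definition $\Gal(\tilde k/k)$-equivariant objects in $\Sat^T_{G,\tilde k,\ell}$, with the cocycle condition that the composite of the $r$ twists of $\gamma$ is the identity; on the dual side this is precisely the data of a representation of $\hat G^T\rtimes \Gal(\tilde k/k) = {}^CG$. Here one must check that the $\Gal(\tilde k/k)$-action on $\Sat^T_{G,\tilde k,\ell}$ induced by $\sigma^*$ on sheaves corresponds, under the equivalence of the previous paragraph, to the pinned action $\xi$ of $\Gal(\tilde k/k)$ on $\hat G^T$ (this action commutes with the cohomological $\bG_m$ because $\sigma^*$ commutes with cohomological shifts) — this is the standard identification of the Galois action on $\Gr$ with the action on the root datum, already present in the non-modified geometric Satake, now extended to $\hat G^T$. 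Taking $\Gal(\tilde k/k)$-equivariant objects then yields $\Sat^T_{G,\ell}\cong \Rep\big(({}^CG)_{\bQ_\ell}\big)$ as tensor categories. Finally, the compatibility with $\on{H}^*(\Gr_{\bar k},-)$ and the restriction functor along $\sigma\mapsto(1,q,\sigma)$ follows by combining the cohomological-grading computation (the $\bG_m$-weight on the $2i$-th cohomology is $i$, evaluated at $q$) with the fact that the geometric Frobenius $\sigma$ acts on $\on{H}^*(\Gr_{\bar k},\mF)$ compatibly with the Galois-equivariance structure $\gamma$.

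The main obstacle I anticipate is not any single hard theorem but the careful bookkeeping needed to see that the three structures — the cohomological $\bZ$-grading refined to a $\bG_m$-action, the Frobenius-weight structure that makes this $\bG_m$-action specialize to $\sigma\mapsto(1,q)$, and the $\Gal(\tilde k/k)$-descent datum — are mutually compatible and together assemble precisely into $\Rep({}^CG_{\bQ_\ell})$ rather than some other extension of $\Rep(\hat G_{\bQ_\ell})$; in particular one must verify that the $\bG_m$-action obtained is $\Ad\rho_\ad$ (i.e. the torsor is trivial in the right way, using that $\Gr$ is defined over the prime field up to the Galois twist) and that it is the \emph{square root} normalization, so that the semidirect factor is genuinely $\hat G^T$ and not the naive $\hat G\times\bG_m$. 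I would handle this by reducing to the torus case and to the cases of the $\IC_{\hat\mu}$ for $\hat\mu$ minuscule or a sum of fundamental coweights, where the cohomology of Schubert varieties is explicitly computable, and then invoking Tannakian reconstruction. In the mixed-characteristic setting one additionally has to cite \cite{Z17a} to make sense of $\Gr$ and the perverse $t$-structure, but as the excerpt already flags, this is routine given \emph{loc. cit.}
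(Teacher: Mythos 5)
Your proposal follows the same two-step skeleton as the paper: first invoke the split-case equivalence $\on{Rep}(\hat{G}^T_{\bQ_\ell})\cong\Sat^T_{G,\tilde k,\ell}$ over $\tilde k$ (the paper cites \cite[Lemma 5.5.14]{Z17b} for exactly this), then perform $\Gal(\tilde k/k)$-descent by passing to equivariant objects and checking that the induced Frobenius action on $\hat G^T$ is the pinned action $\xi$. The only substantive divergence is in how that last check is handled: the paper appeals to the abstract Tannakian formalism of \cite[Lemma A.3, Prop.\ A.6]{RZ} (and \cite[Lemma 5.5.5]{Z17b}), whereas you propose to verify it by hand in the torus and minuscule cases and then invoke Tannakian reconstruction — both are viable, though the formalism of \cite{RZ} is cleaner since it produces the automorphism of the pinned group directly from the equivalence. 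One caution about your phrasing: you describe the identification of the Galois action with $\xi$ as ``already present in the non-modified geometric Satake,'' but as Remark \ref{R: geo Sat}(1) points out, \emph{without} the cohomological $\bG_m$-factor the Frobenius action obtained from \cite[Lemma A.3]{RZ} does \emph{not} preserve the pinning (this is exactly the gap between ${^L}G^{\on{geo}}$ and ${^C}G$); so the verification that the descent lands on the pinned action is a genuine feature of working with $\hat G^T$ rather than $\hat G$, not a carry-over from the unmodified story. With that caveat noted, your argument is sound and matches the paper's.
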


\begin{proof}
If $G$ is split (so ${^C}G=\hat{G}^T$), this was stated in \cite[Lemma 5.5.14]{Z17b}. So we obtain a natural equivalence 
$\on{Rep}(\hat{G}_{\bQ_\ell}^T)\cong \Sat_{G,\tilde{k},\ell}^T$
satisfying the desired properties as in the theorem with $\sigma$ replaced by $\sigma^r$. In addition, $\hat{G}^T$ is equipped with a pinning (induced from the pinning of $\hat{G}$ as described in \cite[Corollary 5.3.23]{Z17b}). 

Now for $(\mF,\ga)\in \Sat_{G,\ell}^T$, we have a canonical isomorphism $\on{H}^*(\Gr_{\bar{k}},\mF)\cong \on{H}^*(\Gr_{\bar{k}},\sigma^*\mF)\cong\on{H}^*(\Gr_{\bar{k}},\mF)$.
Using the formalism as in \cite[Lemma A.3]{RZ} and an argument similar to \cite[Proposition A.6]{RZ} (see also \cite[Lemma 5.5.5]{Z17b} and the paragraphs before it), we see that the above equivalence induces the pinned action of $\sigma$ on $\hat{G}_{\bQ_\ell}^T$, and the desired equivalence.
\end{proof}
\begin{rmk}\label{R: geo Sat}
(1) Without adding the $\bG_m$ factor, the Galois action of $\sigma$ on $\hat{G}$ obtained by the formalism \cite[Lemma A.3]{RZ} does not preserve the pinning. The semidirect product of $\Ga_k$ with $\hat{G}(\bQ_\ell)$ using this action was denoted by ${^L}G^{\on{geom}}$ in \emph{loc. cit.} (and later denoted by ${^L}G^{\on{geo}}$ in \cite[\S 5]{Ri} and in \cite[\S 5.5]{Z17b}). Theorem \ref{T: geom Sat} is compatible with \cite[\S 5]{Ri} (see also \cite[Theorem 5.5.12]{Z17b}), via restriction along the injective map ${^L}G^{\on{geo}}\to {^C}G(\bQ_\ell)$. 
\quash{
where a certain category $\on{Rep}^c({^L}G^{\on{geo}})$ of finite dimensional continuous representations of ${^L}G^{\on{geo}}$ was proved to be equivalent to  $\Sat_{G,\ell}$. Namely, the following commutative is commutative
\[
\begin{CD}
\on{Rep}({^C}\hat{G}_{\bQ_\ell})@>\cong>> \Sat_{G,\ell}^T\\
@VVV@VVV\\
\on{Rep}^c({^L}G^{\on{geo}})@>\cong>> \Sat_{G,\ell},
\end{CD}
\]
where the left vertical arrow is the natural restriction functor (which is fully faithful), and right vertical arrow is the full embedding via the Galois descent.}

(2) One of the consequences of the above theorem is as follows. For a $\sigma$-invariant dominant weight $\hat\mu$, there is a unique (up to isomorphism) irreducible representation $V_{\hat\mu}$ of ${^C}G_{\bQ_\ell}$ such that $V_{\hat\mu}|_{\hat{G}}$ is irreducible of highest weight $\hat\mu$ and that the action of $\bG_m\times\langle\sigma\rangle$ on the lowest weight line of $V_{\hat{\mu}}$ (w.r.t. $(\hat{G},\hat{B})$) is trivial. 
Namely, under the geometric Satake, $V_{\hat\mu}$ corresponds to $\IC_{\hat\mu}$ equipped with the natural $\Gal(\tilde k/k)$-equivariant structure. Of course, this fact is well-known.
\end{rmk}

We will need the following properties of the geometric Satake equivalence. 

%First, the geometric Satake provides a natural pinning of $\hat{G}$. For example, the grading of the cohomology functor gives a cocharacter $\bG_m\to\hat{G}$. Then $\hat{T}$ is its centralizer, and $\hat{B}$ is the unique Borel such that it is dominant. It turns out then this cocharacter is nothing but $2\rho$.

First, let $T$ be the abstract Cartan of $G$. We can define the Satake category $\Sat_{T,\ell}^T$ as a subcategory of perverse sheaves on $\Gr_T$.
For a choice a Borel subgroup $B\subset G$ over $\mO$, we have the correspondence $\Gr_T\xleftarrow{r}\Gr_B\xrightarrow{i}\Gr_G$.
Recall that for $\hat\la\in\xch(\hat T)$, we have the point $\hat\la(\varpi)\in\Gr_T(\tilde k)$.
%There is a natural equivalence
%\begin{equation}\label{E: Sat torus}
%  \Rep( {^C}T_{\bQ_\ell})\cong \Sat_{T,\ell}^T.
%\end{equation}
Let $\Gr_{B,\hat\la}=r^{-1}(\hat\la(\varpi))$ be the (geometrically) connected component of $\Gr_B$ given by $\hat\la$. We write $r_{\hat\la}$ (resp. $i_{\hat\la}$) be the restriction of $r$ (resp. $i$) to $\Gr_{B,\hat\la}$.
Then we define the Mirkovi\'c-Vilonen's weight functor 
\[
    \on{CT}: \Sat_{G,\ell}^T\to \Sat_{T,\ell}^T,\quad  \on{CT}(\mF)=\bigoplus_{\hat\la}  r_{\hat\la,!} i_{\hat\la}^*\mF[(2\rho,\hat\la)].
\]
We note that $\on{CT}(\mF)$ is a sheaf on $\Gr_{T,\tilde k}$ naturally equipped with a $\Gal(\tilde k/k)$-equivariant structure so that the above definition makes sense. Then the the geometric Satake fits into the following commutative diagram
\begin{equation}\label{E: res levi}
\xymatrix{
\Rep({^C}G_{\bQ_\ell}) \ar^-{\Sat}[r]\ar_{\Res}[d]& \Sat_{G,\ell}^T \ar^{\on{CT}}[d]\\
\Rep( {^C}T_{\bQ_\ell}) \ar^-{\Sat}[r] &\Sat^T_{T,\ell}.
}
\end{equation}
This follows from the usual compatibility between the geometric Satake and the restriction to the maximal torus, with the Galois action taking into account.

For $\mF\in \Sat_{G,\ell}^T$, its Frobenius trace function 
$$f_{\mF}\in C_c(K\backslash G(F)/K,\bQ_\ell)$$ 
makes sense as usual. 
The next fact we need is a description of $f_{\Sat(V)}$ for $V\in {^C}G_{\bQ_\ell}$. For this purpose, we need to recall the so-called Brylinski-Kostant filtration. Let $\{\hat e,2\rho, \hat f\}$ be the principal $\fraks\frakl_2$-triple of $\hat{G}$ containing $\hat e\in \Lie \hat{B}$ and $2\rho\in \xcoch(\hat{T})\subset\Lie\hat{T}$. For a representation $V$ of $\hat{G}$ and $\hat\la\in \xch(\hat T)^{-}$, we define the Brylinski-Kostant filtration on $V(\hat\la)$ as
\[
   F_iV(\hat\la):=\ker(\hat f^{i+1}: V\to V)\cap V(\hat\la).
\]
Let $\on{gr}^F_\bullet V(\hat\la)$ denote its associated graded.
Note that if $\hat\la$ is $\sigma$-invariant, then $F_iV(\hat\la)$ is $\sigma$-stable and therefore $\sigma$ acts on $\on{gr}^F_\bullet V(\hat\la)$.

\begin{prop}\label{P: trace Frob}
Let $V$ be a representation of $^{C}G_{\bQ_\ell}$, and $\hat\la\in \xch(\hat T)^{\sigma,-}$. Then 
\[
   f_{\Sat(V)}(\hat\la(\varpi))=(-1)^{\langle2\rho,\hat\la\rangle}\tr((1,q,\sigma)\mid \on{gr}_i^FV(\hat\la))q^{-i}.
\]   
\end{prop}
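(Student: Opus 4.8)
\textbf{Proof proposal for Proposition \ref{P: trace Frob}.}

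The plan is to compute the Frobenius trace function of $\Sat(V)$ at the point $\hat\la(\varpi)$ using the weight functor $\on{CT}$ together with the commutative diagram \eqref{E: res levi} that relates geometric Satake to restriction to the maximal torus. First I would reduce to the case where $V = V_{\hat\mu}$ is one of the irreducible representations of ${}^CG_{\bQ_\ell}$ described in Remark \ref{R: geo Sat}(2), so that $\Sat(V_{\hat\mu})$ is $\IC_{\hat\mu}$ with its canonical $\Gal(\tilde k/k)$-equivariant structure; both sides of the asserted identity are additive in $V$, so this reduction is harmless. Next, by the sheaf-function dictionary applied to the MV weight functor, $f_{\Sat(V)}(\hat\la(\varpi))$ is computed from the stalk cohomology $i_{\hat\la}^*\IC_{\hat\mu}$ of the intersection complex along the semi-infinite orbit $\Gr_{B,\hat\la}$, shifted by $[(2\rho,\hat\la)]$, with the $\sigma$-equivariance taken into account via the equivariant structure. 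The sign $(-1)^{\langle 2\rho,\hat\la\rangle}$ will come precisely from this cohomological shift (equivalently from the parity of the relevant cohomology, all of which is concentrated in the correct degree by the MV purity/parity results), and the factor $q^{-i}$ will come from the Tate/Frobenius weight on the $i$-th graded piece of the cohomology of $i_{\hat\la}^!\IC_{\hat\mu}$ (or $i_{\hat\la}^*$, depending on normalization).

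The key geometric input is the Mirkovi\'c--Vilonen theorem identifying $\on{H}^{\langle2\rho,\hat\la\rangle}_c(\Gr_{B,\hat\la}, i_{\hat\la}^*\IC_{\hat\mu})$ with the weight space $V_{\hat\mu}(\hat\la)$, together with the refinement — due to Ginzburg and to Brylinski--Kostant, as reinterpreted by Ngô--Polo and others — that the weight filtration on this cohomology (i.e. the filtration by Frobenius eigenvalues, or equivalently the cohomological/perverse filtration coming from the MV cycles) matches the Brylinski--Kostant filtration $F_\bullet V_{\hat\mu}(\hat\la)$ defined via powers of the principal nilpotent $\hat f$. Concretely, I would invoke the statement that $\on{gr}_i^W \on{H}^{\langle2\rho,\hat\la\rangle}_c(\Gr_{B,\hat\la}, i_{\hat\la}^*\IC_{\hat\mu})$ is pure of weight $\langle2\rho,\hat\la\rangle + 2i$ (suitably normalized) and is isomorphic, as a $\sigma$-representation, to $\on{gr}_i^F V_{\hat\mu}(\hat\la)$. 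Granting this, the Frobenius trace on the stalk becomes $\sum_i \tr(\sigma \mid \on{gr}_i^F V_{\hat\mu}(\hat\la)) \cdot q^{?}$, and one only needs to track the normalization: the element acting is $(1,q,\sigma) \in \hat G \rtimes (\bG_m \times \langle\sigma\rangle)$, where the $\bG_m = q$ part records exactly the Tate twist, so a careful bookkeeping of how the $\bG_m$-weight on the cohomology is related to the cohomological degree $i$ yields $q^{-i}$ rather than $q^{i}$ or $q^{i - \langle2\rho,\hat\la\rangle}$, etc.

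The main obstacle, I expect, is not the existence of the identification of filtrations — this is essentially classical (Ginzburg, Brylinski) — but rather getting all the normalizations exactly right: the direction of the Brylinski--Kostant filtration (it is defined here with $\ker \hat f^{i+1}$, using $\hat\la$ anti-dominant, which is why $\hat f$ rather than $\hat e$ appears), the sign from the perverse shift $[\langle2\rho,\hat\mu\rangle]$ in the definition of $\IC_{\hat\mu}$ versus the shift $[(2\rho,\hat\la)]$ in the weight functor, the choice of geometric versus arithmetic Frobenius (the paper fixes the geometric $q$-Frobenius $\sigma$, see the footnote in \S\ref{SS: statement}), and the fact that $\sigma$ acts on $\Gr_T(\tilde k)$ permuting the points $\hat\la(\varpi)$ compatibly with its action on $\xch(\hat T)$, so that $\hat\la$ being $\sigma$-invariant is exactly what makes $f_{\Sat(V)}(\hat\la(\varpi))$ land in $\bQ_\ell$ (indeed in $\bZ$) and makes $\sigma$ act on $\on{gr}^F_\bullet V(\hat\la)$. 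I would organize the write-up so that the bulk is a single careful stalk computation for $\IC_{\hat\mu}$ using \eqref{E: res levi} and the MV weight functor, citing \cite{Z17b} (or the standard references) for the matching of the weight filtration with the Brylinski--Kostant filtration, and then spend a short paragraph pinning down signs and the exponent of $q$ by testing against the rank-one case $G = \on{PGL}_2$ (or $\GL_2$), where $V_{\hat\mu}(\hat\la)$ and its $F$-filtration are completely explicit.
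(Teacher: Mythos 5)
Your plan is in essence the argument the paper itself cites, namely \cite[\S 5]{Z15} (with the Galois action incorporated): reduce to $V=V_{\hat\mu}$, compute $f_{\IC_{\hat\mu}}(\hat\la(\varpi))$ from the stalk cohomology using the MV weight functor and diagram \eqref{E: res levi}, and match the Frobenius weight filtration on stalk cohomology with the Brylinski--Kostant filtration $F_\bullet V_{\hat\mu}(\hat\la)$, with the $\bG_m$-factor of ${}^CG$ bookkeeping the Tate twist. So as a reconstruction of the reference, the skeleton is right, including the role of $(1,q,\sigma)$ and the remark about the parity of $\langle 2\rho,\hat\la\rangle$ giving the sign.

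However, there is a genuine gap at exactly the point the paper's own proof dwells on. The purity of the stalk cohomology of $\IC_{\hat\mu}$ is \emph{not} simply ``MV purity/parity results'' one can cite off the shelf here: in \cite[\S 5]{Z15} (Lemma 5.8), purity is proved using the big open cell of the affine Grassmannian. In the present setting one must also allow $F$ of mixed characteristic, where the affine Grassmannian is a perfect scheme and the big open cell argument is not available. Your proposal does not notice that the purity input is an issue, let alone supply a replacement. The paper's proof consists precisely in pointing out this obstacle and replacing the purity argument by a parity argument, in the spirit of \cite[p.~452]{Z17a}, which is enough to pin down the Frobenius eigenvalues on the stalk. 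Without acknowledging this, the step where you ``invoke'' that $\on{gr}_i^W\on{H}^{\langle2\rho,\hat\la\rangle}_c(\Gr_{B,\hat\la},i_{\hat\la}^*\IC_{\hat\mu})$ is pure of the claimed weight does not actually go through in the generality the proposition requires.
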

\begin{proof}
This follows \cite[\S 5]{Z15}, by taking into account of the Galois action. Note that the proof of Lemma 5.8 of \emph{loc. cit.} relies on the existence of ``big open cell" of the affine Grassmannian, which is not known in mixed characteristic. However, one can easily replace the purity argument in \emph{loc. cit.} by a parity argument, e.g. the argument in the middle of p. 452 of \cite{Z17a}.
\end{proof}
\begin{rmk}%Recall that the $q$-analogue of $\hat\la$-weight multiplicity of a representation $V$ of $\hat{G}$ (defined by Lusztig) is the polynomial $P_{\hat\la}(V,q^{-1}):=\sum_{i} \dim \on{gr}_i^FV(\hat\la)q^{-i}\in\bN[q^{-1}]$.
Recall that for the representation $V=V_{\hat\mu}$ of ${^C}G_{\bQ_\ell}$ as in Remark \ref{R: geo Sat}, and for $\hat\la\in\xch(\hat T)^{\sigma}$, Jantzen's twisted character formula  (\cite[Satz 9]{Ja}) expresses $\tr((1,1,\sigma)\mid V_{\hat\mu}(\hat\la))$ as the dimension of a representation of a reductive group $\hat G_\sigma$ whose weight lattice is $\xch(\hat T)^\sigma$. %(e.g. see the table in the proof of \cite[Lemma 5.1.1]{XZ2} for the examples of the group $\hat G_\sigma$). 
It would be very interesting to have its $q$-analogue, expressing $\sum_{i} \tr(\sigma\mid \on{gr}_i^FV_{\hat\mu}(\hat\la))q^{-i}$ in terms of representations of $\hat G_\sigma$. %which should, among other things, imply that $\tr(\sigma\mid \on{gr}_i^FV_{\hat\mu}(\hat\la))\in \bZ_{\geq 0}$.
\end{rmk}
We do not have such a formula at the moment. The following lemma is sufficient for our purpose.
\begin{lem}\label{L: integrality trace}
Let $V_{\hat\mu}$ be as above. Then for every $\hat\la\in \xch(\hat T)^\sigma$, $\tr(\sigma\mid \on{gr}_{i}^FV_{\hat\mu}(\hat\la))\in\bZ$.
\end{lem}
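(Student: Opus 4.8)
The goal is to show that $\tr(\sigma\mid \on{gr}_i^F V_{\hat\mu}(\hat\la))$ is an integer, where $V_{\hat\mu}$ is the irreducible ${}^CG_{\bQ_\ell}$-representation from Remark \ref{R: geo Sat} and the Brylinski--Kostant filtration is defined via the principal $\fraks\frakl_2$. The plan is to reduce this to an integrality statement about Frobenius trace functions on $\Gr$, where integrality is automatic. Concretely, I would combine Proposition \ref{P: trace Frob} with the fact that $f_{\Sat(V_{\hat\mu})} = f_{\IC_{\hat\mu}}$ takes values in $\bZ$ (the $\IC$ sheaf of the Schubert variety, with its canonical $\Gal(\tilde k/k)$-equivariant structure, is defined over $\bZ$ up to the usual normalization, so its trace of Frobenius is $\bZ$-valued — indeed this is precisely the function $\Sat^{\cl}$ of the corresponding element, cf. the discussion around Proposition \ref{prop: main}).

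First I would fix $\hat\la\in\xch(\hat T)^{\sigma,-}$ and run the computation of Proposition \ref{P: trace Frob} not over the base field $k$ (with Frobenius $\sigma$, $q$-power $q$) but over each of the fields $\bF_{q^n}$ in turn, i.e. replace $(\sigma, q)$ by $(\sigma^n, q^n)$ for every $n\geq 1$ (more precisely, one wants $n$ ranging over the integers coprime to $r=[\tilde F:F]$ so that $\sigma^n$ still generates $\Gal(\tilde k / k)$, or simply all $n$ while tracking the cyclic group structure). This gives, for each such $n$,
\[
   f_{\IC_{\hat\mu}, \bF_{q^n}}(\hat\la(\varpi)) = (-1)^{\langle 2\rho,\hat\la\rangle}\sum_i \tr\bigl((1,q^n,\sigma^n)\mid \on{gr}_i^F V_{\hat\mu}(\hat\la)\bigr)\, q^{-ni} \in \bZ.
\]
The left-hand side is an integer because Frobenius traces of a $\bZ_\ell$-sheaf defined over a finite field, with weights and the explicit normalization here, land in $\bZ$ (one may cite the same parity/purity input used in Proposition \ref{P: trace Frob}, or simply the classical Satake integrality). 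Writing $a_{i,j} := \tr(g^n \mid (\on{gr}_i^F V_{\hat\mu}(\hat\la))_j)$ where $j$ indexes the eigenspaces of $\sigma$ on the relevant graded piece and $g = (1,q,\sigma)$, one gets a system: for all $n$, $\sum_{i,j} (\text{eigenvalue data})\, q^{-ni} \in \bZ$.

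The key step is then a linear-algebra / Dedekind-independence argument to isolate $\tr(\sigma\mid \on{gr}_i^F V_{\hat\mu}(\hat\la))$ and conclude it is an integer. The point is that $\sigma$ has finite order $r$ on $\on{gr}_i^F V_{\hat\mu}(\hat\la)$, so its eigenvalues are roots of unity; thus $\tr(\sigma\mid \on{gr}_i^F V_{\hat\mu}(\hat\la))$ is an algebraic integer and is Galois-stable (the whole $\Gal(\tilde k/k)$-equivariant object, hence $V_{\hat\mu}$, is defined over $\bQ$, cf. Remark \ref{R: geo Sat}(2) and the fact that $\IC_{\hat\mu}$ is defined over $\bQ_\ell$ with integral trace function), so it suffices to show it is \emph{rational}. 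Rationality follows by extracting the $q^{-ni}$-coefficients: varying $n$ and using that distinct powers $q^{-i}$ give multiplicatively independent data, one can solve for $\sum_j a_{i,j}$ — but $a_{i,j}$ at $n=1$ is exactly $\tr(\sigma\mid \on{gr}_i^F V_{\hat\mu}(\hat\la))$ — as a $\bZ$-linear (in fact $\bZ[q^{\pm 1}]$-linear after clearing denominators) combination of the integers $f_{\IC_{\hat\mu},\bF_{q^n}}(\hat\la(\varpi))$. I expect the main obstacle to be organizing this extraction cleanly: one must separate the contribution of the grading index $i$ (which is visible through the power of $q$) from that of the $\sigma$-eigenvalue (a root of unity), and make sure the resulting expression for the trace is a genuine $\bZ$-combination rather than merely a $\bQ$-combination — this is where one uses that $q$ is not a root of unity together with the finite order of $\sigma$, so the two kinds of ``frequencies'' do not interfere. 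An alternative, possibly cleaner route for the same step: restrict $V_{\hat\mu}$ to $\hat G\rtimes\langle\sigma\rangle$ (forgetting $\bG_m$), note each $\on{gr}_i^F V_{\hat\mu}(\hat\la)$ is a representation of the cyclic group $\langle\sigma\rangle$ on a $\bQ$-vector space (by definedness over $\bQ$), and the character of a $\bQ[\langle\sigma\rangle]$-module is automatically $\bZ$-valued — reducing the whole lemma to checking that $\on{gr}_i^F V_{\hat\mu}(\hat\la)$ carries a natural $\bQ$-structure compatible with $\sigma$, which in turn follows from the geometric Satake picture where this graded piece is identified with (a Tate twist of) a cohomology group of $\Gr$ with its natural $\bQ$-structure and Galois action.
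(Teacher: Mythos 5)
Your main argument (the $\bF_{q^n}$-extrapolation combined with Dedekind/Vandermonde independence) rests on the a priori claim that $f_{\IC_{\hat\mu},\bF_{q^n}}(\hat\la(\varpi))\in\bZ$ for all $n$. That is exactly the kind of integrality that this lemma is meant to produce: Lemma \ref{L: integrality trace} feeds into Lemma \ref{L: int Vmu}, which is then used to show $\IC_{\hat\mu}\in\Sat_{G,\ell}^{T,\on{int}}$, i.e.\ that $f_{\IC_{\hat\mu}}\in H_G$. So appealing to ``classical Satake integrality'' here would be circular in the context of \S2, whose whole point is to deduce Proposition \ref{prop: main} from geometric Satake rather than from the classical Satake isomorphism. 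Likewise, the ``parity/purity input used in Proposition \ref{P: trace Frob}'' only gives the matching of cohomological degree with Frobenius weight (that each stalk cohomology is pure of the right weight); it does not by itself produce rationality or integrality of the trace of $\sigma\cdot\Frob$. And ``the $\IC$ sheaf is defined over $\bZ$'' in the $\ell$-adic Galois-equivariant sense, with integral Frobenius trace, is not an easier fact than the lemma itself in the mixed-characteristic setting. So the main route, as written, has a genuine gap. (The extraction step is fine once integrality of the trace functions over all $\bF_{q^n}$ is granted: for fixed residue $n_0\bmod r'$, where $r'$ is the order of $\sigma$ on $V_{\hat\mu}$, the Laurent polynomial $\sum_i \tr(\sigma^{n_0}\mid\on{gr}_i^F)\,x^{c-i}$ takes integer values at infinitely many powers of $q$, which forces its algebraic-integer coefficients to be Galois-stable, hence in $\bZ$.)

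Your ``alternative, possibly cleaner route'' at the end, however, is essentially the paper's proof, and it correctly sidesteps the problem: once $\on{gr}_i^F V_{\hat\mu}(\hat\la)$ carries a $\sigma$-stable $\bQ$-structure, the trace of the finite-order operator $\sigma$ is a rational algebraic integer, hence an integer. The one correction is the source of the $\bQ$-structure: it is not the cohomology of $\Gr$ but the \emph{stalk} cohomology $\mH^{-2i+\langle 2\rho,\hat\la\rangle}_{\hat\la(\varpi)}\IC_{\hat\mu}$ that matches $\on{gr}_i^F V_{\hat\mu}(\hat\la)$ (the analogue of Proposition \ref{P: trace Frob}). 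The paper moreover passes from the $p$-adic $\Gr$ to the complex affine Grassmannian $\Gr_{\bG}$ attached to the same root datum, where $\sigma$ is a genuine finite-order automorphism; there the $\sigma$-stable $\bZ$-structure on $\IC_{\hat\mu}$ is supplied by Mirkovi\'c--Vilonen \cite[Proposition 8.1]{MV}. That passage to $\bC$ is precisely what lets one avoid any independent integrality input about Frobenius traces.
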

\begin{proof}
We may assume that $\hat\la\in\xch(T)^{\sigma,-}$. The root datum of $G$ defines a reductive group $\bG$ over $\bC$ equipped with a $\bC$-automorphism $\sigma$. Let $\Gr_{\mathbb G}$ be its affine Grassmannian, acted by $\sigma$. We have the geometric Satake for $\Gr_{\bG}$, and the analogous statement of Proposition \ref{P: trace Frob} in this setting is
\[\tr(\sigma \mid \on{gr}_i^FV_{\hat\mu}(\hat\la))= \tr(\sigma\mid \mH_{\hat\la(\varpi)}^{-2i+\langle 2\rho,\hat\la\rangle}\IC_{\hat\mu}),\]
where $\mH_{\hat\la(\varpi)}^{-2i+\langle 2\rho,\hat\la\rangle}\IC_{\hat\mu}$ denotes the stalk cohomology of $\IC_{\hat\mu}$ at $\hat\la(\varpi)$.
Over $\bC$, the sheaf $\IC_{\hat\mu}$ has a natural $\bZ$-structure preserved by the action of $\sigma$ (\cite[Proposition 8.1]{MV}). The lemma follows.
\end{proof}

\subsection{The representation ring}
We generalize some well-known relations between the representation ring of a reductive group and its ring of invariant functions. Let $E$ be a characteristic zero field.
Let $\on{Rep}^+(\hat{G}^T_E)\subset \on{Rep}(\hat{G}^T_E)$ denote the subcategory consisting of those objects on which all weights of $\bG_m\subset \hat{G}\rtimes_{\Ad \rho_\ad} \bG_m=\hat G^T$ are $\geq 0$. Let $\on{Rep}(V_{\hat{G},\rho_\ad,E})$ denote the category of finite dimensional algebraic representations of $V_{\hat{G},\rho_\ad,E}$.

\begin{lem}
The inclusion $ \hat{G}^T\subset V_{\hat{G},\rho_\ad}$ induces an equivalence of categories $\on{Rep}^+(\hat{G}^T_E)\cong \on{Rep}(V_{\hat{G},\rho_\ad,E})$.
\end{lem}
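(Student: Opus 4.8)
The plan is to realize both categories as comodule categories over the relevant Hopf/bialgebra and match the structure rings. Recall $V_{\hat G,\rho_\ad} = \bA^1\times_{\rho_\ad^+,\hat T_\ad}V_{\hat G}$, so its ring of functions is $\bZ[V_{\hat G,\rho_\ad}] = \bZ[t]\otimes_{\bZ[\xch(\hat T_\ad)_{\on{pos}}]}\bZ[V_{\hat G}]$, where $\bZ[t] = \bZ[\xch(\bG_m)_{\geq 0}]$ and the map $\bZ[\xch(\hat T_\ad)_{\on{pos}}]\to\bZ[t]$ sends $\bar e^{\hat\alpha_i}\mapsto t^{\langle\rho_\ad,\hat\alpha_i\rangle} = t$ (since $\langle\rho_\ad,\hat\alpha_i\rangle = 1$ for all simple roots). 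Thus $\bZ[V_{\hat G,\rho_\ad}]$ is the Rees-type algebra $\bigoplus_{\hat\nu}\on{fil}_{\hat\nu}\bZ[\hat G]$ graded in a single variable $t$ by $\langle\rho_\ad,\hat\nu_{\on{ad}}\rangle$ where $\hat\nu_{\on{ad}}$ is the image of $\hat\nu$ in $\xch(\hat T_\ad)_{\on{pos}}$. After inverting $t$ this recovers $\bZ[\hat G^T]$ (by \eqref{R: another version} and the localization property of the Vinberg monoid), which records the compatibility of the monoid with its group of units.

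\textbf{Key steps.} First I would spell out that a representation of the affine monoid scheme $V_{\hat G,\rho_\ad,E}$ is the same as a comodule over the bialgebra $E[V_{\hat G,\rho_\ad}]$ that is finite-dimensional over $E$; likewise $\Rep(\hat G^T_E)$ is the category of finite-dimensional $E[\hat G^T]$-comodules. The inclusion $\hat G^T\hookrightarrow V_{\hat G,\rho_\ad}$ dualizes to the localization map $E[V_{\hat G,\rho_\ad}]\to E[\hat G^T]$ inverting the variable $t = d_{\rho_\ad}$, which is injective; hence restriction of comodules $\Rep(V_{\hat G,\rho_\ad,E})\to\Rep(\hat G^T_E)$ is faithful, and one checks it is fully faithful because a morphism of $\hat G^T$-comodules between comodules coming from $V_{\hat G,\rho_\ad}$ is automatically $t$-graded-compatible, hence a monoid-comodule map. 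Second, I would identify the essential image: a finite-dimensional $\hat G^T$-representation $W$ extends to $V_{\hat G,\rho_\ad}$ iff its $E[\hat G^T]$-comodule structure map lands in the subalgebra $E[V_{\hat G,\rho_\ad}] = \bigoplus_{n\geq 0} t^n\cdot\on{fil}(\cdot)$ — and the key point is that the non-negative part of this grading is detected by the $\bG_m$-action, i.e. $W$ extends iff all $\bG_m$-weights of $W$ are $\geq 0$, which is precisely the defining condition of $\Rep^+(\hat G^T_E)$. The reason a $W$ with non-negative $\bG_m$-weights automatically has coaction landing in $E[V_{\hat G,\rho_\ad}]$ is the identity $\on{fil}_{\hat\nu}\bZ[\hat G] = \{f\in\bZ[\hat G] : \text{all left-right $\hat T$-weights }(\hat\mu,\hat\mu')\text{ of }f\text{ satisfy }\hat\mu\preceq\hat\nu^*,\ \hat\mu'\preceq\hat\nu\}$ together with the fact that for $W$ irreducible of highest weight $\hat\mu$ with $\bG_m$-weight $m\geq 0$ on it, the matrix coefficients of $W$ are supported in degrees $\leq m$ of the $t$-grading (this is essentially the statement that $\rho_\ad$-pairings of dominant weights are controlled by the highest $\bG_m$-weight). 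Working over $E$ of characteristic zero, $\Rep(\hat G^T_E)$ is semisimple, so it suffices to verify this extension criterion on irreducibles, where it is a direct weight computation.

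\textbf{Main obstacle.} The subtle point — and the step I expect to require real care — is step two: pinning down exactly which finite-dimensional $\hat G^T$-representations extend to the monoid, i.e. proving that the coaction of a $W\in\Rep^+(\hat G^T_E)$ factors through $E[V_{\hat G,\rho_\ad}]\subset E[\hat G^T]$. This is where the precise description of $\on{fil}_{\hat\nu}\bZ[\hat G]$ via the multi-filtration from \cite{XZ2} enters, and one must match the single $t$-grading on $\bZ[V_{\hat G,\rho_\ad}]$ (coming from $\rho_\ad$) against the cone $\xch(\hat T)^+_{\on{pos}}$ that indexes the Vinberg multi-filtration. Concretely: if $W$ has $\bG_m$-weight $m$ and $\hat G$-weights $\hat\mu$, one needs $\langle\rho_\ad,\hat\mu^*_{\text{ad-part}}\rangle$-type bounds, i.e. that every matrix coefficient $e^{\hat\mu}\otimes e^{\hat\mu'}$ appearing in $W$ has $\hat\mu\preceq(\text{something of $\rho_\ad$-degree}\leq m)$; this follows because the extreme weights of $W$ differ from the highest weight by sums of positive roots, and the $\bG_m = \Ad\rho_\ad$-twisting shifts these exactly by $\langle\rho_\ad,-\rangle$ of that root sum. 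Once this bookkeeping is set up, semisimplicity in characteristic zero reduces everything to irreducibles and the equivalence of categories follows: the functor is fully faithful with essential image $\Rep^+(\hat G^T_E)$, and it is automatically a tensor equivalence since $\hat G^T\hookrightarrow V_{\hat G,\rho_\ad}$ is a monoid map.
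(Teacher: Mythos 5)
Your plan is a genuinely different route from the paper's. The paper does not touch the Rees-algebra structure of $\bZ[V_{\hat G,\rho_\ad}]$ at all: it simply quotes the standard criterion for when an irreducible representation of $\hat{G}\times^{Z_{\hat G}}\hat T$ extends to the Vinberg monoid $V_{\hat G}$ (namely, that $\hat\nu+w_0(\hat\mu)$ is a sum of non-negative roots), pulls this back along $\rho_\ad^+$ to get conditions on irreducible representations of $(\hat G\times\bG_m)/(2\rho\times\id)(\mu_2)$, and then does a short weight computation using \eqref{R: another version} to rewrite those conditions as the defining property of $\on{Rep}^+(\hat G^T_E)$. Your comodule/Rees-algebra approach reproves the extension criterion from scratch instead of citing it; that buys self-containedness at the cost of more bookkeeping, and your reduction of full faithfulness to injectivity of $E[V_{\hat G,\rho_\ad}]\hookrightarrow E[\hat G^T]$ (plus flatness of $W\otimes(-)$) is correct.

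But the step you flag as requiring "real care" --- identifying the essential image --- does contain a genuine gap, and it is located exactly at the weight computation that the paper carries out. You write that for "$W$ irreducible of highest weight $\hat\mu$ with $\bG_m$-weight $m\geq 0$ on it," the matrix coefficients sit in $t$-degrees $\leq m$. This slides between two different copies of $\bG_m$. The $\bG_m$ in the definition of $\on{Rep}^+(\hat G^T_E)$ is the semidirect factor $\bG_m\subset\hat G\rtimes_{\Ad\rho_\ad}\bG_m$; it does \emph{not} commute with $\hat G$, so an irreducible $\hat G^T$-representation carries a whole spectrum of such weights (e.g.\ the standard representation of $\GL_2=\widehat{\on{PGL}_2}^T$ has $\bG_m$-weights $0$ and $1$), and "the" $\bG_m$-weight $m$ is not well defined. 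The $\bG_m$ that \emph{does} act by a single scalar $n$ on an irreducible is the second factor in the quotient form $(\hat G\times\bG_m)/(2\rho\times\id)(\mu_2)$, which is central, and it is this weight that naturally matches the Rees $t$-grading. The missing content of your argument is precisely the translation between the two: unwinding \eqref{R: another version}, the semidirect $\bG_m$-weights on such an irreducible run over $(n+\langle 2\rho,\hat\la\rangle)/2$ for $\hat\la$ a $\hat T$-weight of $V_{\hat\mu}$, with minimum $(n-\langle2\rho,\hat\mu\rangle)/2$; so "all semidirect $\bG_m$-weights $\geq 0$" is equivalent to the pair of conditions $n\geq\langle2\rho,\hat\mu\rangle$ and $(-1)^n=(-1)^{\langle2\rho,\hat\mu\rangle}$. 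That equivalence is exactly the content of the paper's proof, and without spelling it out your "$t$-degree $\leq m$" bound is neither well-posed nor established. Once you insert it, your approach does close; as written, it does not.
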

\begin{proof}
First, under the inclusion $\hat{G}\times^{Z_{\hat G}}\hat{T}\to V_{\hat{G}}$, an irreducible representation $V$ of $\hat{G}_E\times \hat{T}_E$ can be extended to a representation of $V_{\hat{G},E}$ if and only if the following holds:
if $V|_{\hat{G}_E}$ has the highest weight $\hat\mu$, and $V|_{\hat{T}_E}$ has the weight $\hat\nu$, then $\hat\nu+w_0(\hat\mu)$ is a sum of nonnegative roots of $\hat{G}$. Therefore, an irreducible representation of $(\hat{G}\times\bG_m)/(2\rho\times\id)(\mu_2)$ can be extended to a representation of $V_{\hat{G},\rho_\ad}$ if and only if the following two conditions hold: 
\begin{enumerate}
\item[(i)] $\bG_m$ acts on $V$ by some weight $n$; and 
\item[(ii)] If $\hat{\mu}$ is the highest weight of $V$ as a $\hat{G}$-representation, then $\langle2\rho,\hat{\mu}\rangle \leq n$ and $(-1)^{\langle 2\rho,\hat{\mu}\rangle}=(-1)^{n}$.
\end{enumerate}
But under the isomorphism \eqref{R: another version}, this exactly corresponds to an object in $\on{Rep}^+(\hat{G}^T_E)$.
\end{proof}

%\begin{lem}\label{L: coor ring}
%There is a natural ring isomorphism 
%$K(\on{Rep}(V_{\hat{G},\rho_\ad,E}))/(v-q)\cong \bZ[V_{\hat{G},\rho_\ad}|_{d=q}]^{\hat{G}}$.
%\end{lem}
%\begin{proof}
% Taking matrix coefficients gives regular functions on $V_{\hat{G},\rho_\ad}$. The lemma then follows.
%\end{proof}

Now let $\sigma$ be a finite order automorphism of $(\hat G,\hat B,\hat T, \hat e)$ as before. We let $\on{Rep}^{\on{int}}(\bG_m\times\langle\sigma\rangle)$ denote the full tensor subcategory of finite dimensional algebraic $E$-linear representations of $\bG_m\times\langle\sigma\rangle$ consisting of $W$ satisfying the following two properties
\begin{itemize}
\item all $\bG_m$-weights in $W$ are $\geq 0$;
\item the trace of $\sigma$ on each $\bG_m$-weight subspace of $W$ is an integer.
\end{itemize}
This is equivalent to requiring that the trace of $(q,\sigma)$ on $W$ is an integer.

Let $\on{Rep}^{\on{int}}({^C}G_E)$ be the full tensor subcategory of $\on{Rep}({^C}G_E)$ consisting of those representations $V$ such that for every $\sigma$-invariant weight $\hat \la$ of $\hat{T}$ and every $i\geq 0$, the space $\on{gr}_i^FV(\hat \la)$ as a representation of $\bG_m\times \langle \sigma\rangle\subset \hat{G}\rtimes(\bG_m\times\langle\sigma\rangle)$  belongs to $\on{Rep}^{\on{int}}(\bG_m\times\langle\sigma\rangle)$. Note that the restriction of such a representation to $\hat{G}^T$ belongs to $\on{Rep}^+(\hat{G}^T_E)$, and therefore we have the functor 
\[
  \on{Rep}^{\on{int}}({^C}G_E)\to \on{Rep}(V_{\hat G,\rho_\ad,E}\rtimes\langle\sigma\rangle). 
\]
When $\sigma$ is trivial, we have $\on{Rep}^{\on{int}}({^C}G_E)\cong \on{Rep}^+(\hat{G}^T_E)\cong \on{Rep}(V_{\hat G,\rho_\ad,E})$.
\begin{lem}\label{L: K-ring to function}
There exists a unique homomorphism $\tr:K(\on{Rep}^{\on{int}}({^C}G_E))\to\bZ[V_{\hat{G},\rho_\ad}|_{d_{\rho_\ad}=q}]^{c_\sigma(\hat{G})}$ making the following diagram commutative
\begin{equation}\label{E: Kgroup to function}
\xymatrix{
K(\on{Rep}^{\on{int}}({^C}G_E))\ar^-{\tr}[rrrr] \ar_-{K(\Res)}[d]&&&&\bZ[V_{\hat{G},\rho_\ad}|_{d_{\rho_\ad}=q}]^{c_\sigma(\hat{G})}\ar^{\eqref{eq: inj inv function on VT}\circ\Res}[d]&\\
K(\on{Rep}^{\on{int}}({^C}T_E)) \ar^-{[V]\mapsto  \sum_{\hat\la\in \xch(\hat T)^\sigma} \tr((q,\sigma)| V(\hat\la))e^{\hat\la}}[rrrr] &&&& \bZ[\xch(\hat T)^\sigma].
}
\end{equation}
\end{lem}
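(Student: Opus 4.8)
Here is how I would go about proving Lemma~\ref{L: K-ring to function}.

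The plan is to deduce everything from the injectivity of the right vertical arrow. Since the Chevalley restriction $\Res$ of \eqref{E: Chev res q} is an isomorphism and \eqref{eq: inj inv function on VT} is injective, $\eqref{eq: inj inv function on VT}\circ\Res$ is an injective ring homomorphism; hence $\tr$ is unique if it exists, and, writing $\Phi$ for the composite of $K(\Res)$ with the bottom horizontal arrow, it suffices to show that $\Phi$ takes values in the image of $\eqref{eq: inj inv function on VT}\circ\Res$ — the resulting factorization $\tr$ is then automatically a ring homomorphism. Now $K(\Res)$ is multiplicative because $\Res$ is a tensor functor, while the bottom arrow $[W]\mapsto\sum_{\hat\la\in\xch(\hat T)^\sigma}\tr((q,\sigma)\mid W(\hat\la))e^{\hat\la}$ is multiplicative because tensoring ${^C}T$-representations convolves $\hat T$-weight spaces and, in $(W\otimes W')(\hat\la)=\bigoplus_{\hat\mu+\hat\nu=\hat\la}W(\hat\mu)\otimes W'(\hat\nu)$, the permutation of blocks induced by $\sigma$ contributes to $\tr((q,\sigma)\mid-)$ only through the $\sigma$-fixed blocks, i.e. those with $\hat\mu,\hat\nu\in\xch(\hat T)^\sigma$.

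Fix $V\in\on{Rep}^{\on{int}}({^C}G_E)$ and $\hat\la\in\xch(\hat T)^\sigma$. As $\sigma$ fixes the principal nilpotent $\hat f$ and $\Ad\rho_\ad(q)$ scales $\hat f$ by $q^{-1}$, each term $\ker(\hat f^{i+1})\cap V(\hat\la)$ of the Brylinski--Kostant filtration is a $\bG_m\times\langle\sigma\rangle$-subrepresentation of $V(\hat\la)$, so
\[
   \tr\bigl((q,\sigma)\mid V(\hat\la)\bigr)=\sum_i\tr\bigl((q,\sigma)\mid\on{gr}^F_iV(\hat\la)\bigr)\in\bZ
\]
by the definition of $\on{Rep}^{\on{int}}$; thus $\Phi(V)\in\bZ[\xch(\hat T)^\sigma]$. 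On the other hand, by Lemma~\ref{L: image Chevalley restriction} the image of $\eqref{eq: inj inv function on VT}\circ\Res$ is the $\bZ$-span of the orbit sums $s_{\hat\mu}:=\sum_{\hat\la'\in W_0\hat\mu}q^{\langle\rho_\ad,\,\hat\la'-\hat\mu\rangle}e^{\hat\la'}$ ($\hat\mu\in\xch(\hat T)^{\sigma,-}$); since the twisted action $\bullet_{\rho_\ad}$ of \eqref{E: twist W-action} determines all coefficients of a $\bullet_{\rho_\ad}$-invariant element from its coefficients at anti-dominant weights, this $\bZ$-span coincides with $\bZ[\xch(\hat T)^\sigma]\cap\bQ[\xch(\hat T)^\sigma]^{W_0\bullet_{\rho_\ad}}$. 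Hence it remains to show that $\Phi(V)$ is $\bullet_{\rho_\ad}$-invariant, i.e. that
\[
   \tr\bigl((1,q,\sigma)\mid V(w\hat\la)\bigr)=q^{\langle\rho_\ad,\,w\hat\la-\hat\la\rangle}\,\tr\bigl((1,q,\sigma)\mid V(\hat\la)\bigr),\qquad w\in W_0,\ \hat\la\in\xch(\hat T)^\sigma.
\]

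To prove this identity, let $\dot w\in N_{\hat G}(\hat T)$ be the Tits lift of $w$; since $\sigma$ preserves the pinning and $w\in W_0=W^\sigma$, the Tits section is $\sigma$-equivariant, so $\dot w$ commutes with $(1,1,\sigma)$ in ${^C}G$. A computation in the pinned $\SL_2$ attached to a simple root $\hat\al_i$ gives $\Ad\rho_\ad(q)(\dot s_i)=\hat\al_i^\vee(q)\,\dot s_i$, and multiplying along a reduced word for $w$ yields $\Ad\rho_\ad(q)(\dot w)\,\dot w^{-1}=\gamma_w^\vee(q)$ with $\gamma_w^\vee=\rho^\vee-w\rho^\vee\in\xcoch(\hat T)$. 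Therefore, in ${^C}G$,
\[
   \dot w^{-1}\,(1,q,\sigma)\,\dot w=\bigl(w^{-1}(\gamma_w^\vee(q)),\,q,\,\sigma\bigr).
\]
Since $\dot w$ maps $V(\hat\la)$ isomorphically onto $V(w\hat\la)$ and $(1,q,\sigma)$ preserves $V(\hat\la)$ (as $\hat\la$ is $\sigma$-invariant), taking traces over $V(\hat\la)$ and noting that the $\hat T$-factor $w^{-1}(\gamma_w^\vee(q))$ acts on $V(\hat\la)$ by the scalar
\[
   \hat\la\bigl(w^{-1}(\gamma_w^\vee(q))\bigr)=(w\hat\la)\bigl(\gamma_w^\vee(q)\bigr)=q^{\langle w\hat\la,\,\rho^\vee-w\rho^\vee\rangle}=q^{\langle w\hat\la-\hat\la,\,\rho^\vee\rangle}=q^{\langle\rho_\ad,\,w\hat\la-\hat\la\rangle}
\]
yields the identity, and with it the lemma.

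The genuinely delicate point is this last step — transporting the $\bG_m\subset{^C}G$-action on the weight spaces by the Tits lift and pinning down the exact power $q^{\langle\rho_\ad,w\hat\la-\hat\la\rangle}$, which relies on the $\sigma$-equivariance of the Tits section and on $\rho^\vee-w\rho^\vee=\sum_{\beta\in\on{Inv}(w^{-1})}\beta^\vee$. One could instead obtain the $\bullet_{\rho_\ad}$-invariance of $\Phi(V)$ geometrically: passing to $E=\bQ_\ell$, the constant-term compatibility \eqref{E: res levi} together with the Mirkovi\'c--Vilonen formula identifies $\Phi(V)$, up to the sign automorphism $e^{\hat\la}\mapsto(-1)^{\langle2\rho,\hat\la\rangle}e^{\hat\la}$, with $\on{CT}^{\cl}(f_{\Sat(V)})$, and the image of $\on{CT}^{\cl}$ consists of $\bullet_{\rho_\ad}$-invariant functions by the classical Satake isomorphism recalled in \S\ref{SS: statement}.
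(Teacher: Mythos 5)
Your proof is correct, but it takes a genuinely different route from the paper's. The paper constructs $\tr$ directly: using the earlier observation that restricting along $\hat G^T\hookrightarrow V_{\hat G,\rho_\ad}$ identifies $\on{Rep}^+(\hat{G}^T_E)$ with $\on{Rep}(V_{\hat{G},\rho_\ad,E})$, every $V\in\on{Rep}^{\on{int}}({^C}G_E)$ extends to a representation of the monoid $V_{\hat{G},\rho_\ad,E}\rtimes\langle\sigma\rangle$, and $\tr(V)$ is \emph{defined} as the trace function on $V_{\hat{G}}|_{d=\rho_\ad(q)}\sigma$, which is tautologically $c_\sigma(\hat{G})$-invariant over $\overline E$; commutativity of the square over $\overline E$ is then immediate, and integrality is extracted from the identity $\bZ[V_{\hat{G},\rho_\ad}|_{d=q}]^{c_\sigma(\hat{G})}=\overline{E}[V_{\hat{G},\rho_\ad}|_{d=q}]^{c_\sigma(\hat{G})}\cap\bZ[\xch(\hat T)^\sigma]$ coming from Lemma~\ref{L: image Chevalley restriction}. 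You instead build $\tr$ by showing that the composite $\Phi$ of the left and bottom arrows factors through the (injective) right vertical map, and the substance of your argument is the explicit verification that $\Phi(V)$ is $\bullet_{\rho_\ad}$-invariant via the Tits-lift identity $\dot w^{-1}(1,q,\sigma)\dot w=((w^{-1}\gamma_w^\vee)(q),q,\sigma)$ with $\gamma_w^\vee=\rho_\ad-w\rho_\ad$ (which I have checked: the $\SL_2$ calculation giving $\Ad\rho_\ad(q)(\dot s_i)=\hat\al_i^\vee(q)\dot s_i$, the telescoping along a reduced word producing the inversion-coroot sum, and the stability of the Brylinski--Kostant filtration under $\bG_m\times\langle\sigma\rangle$ are all correct). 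In effect you re-derive by hand the $W_0$-equivariance that the paper obtains for free from the Chevalley restriction isomorphism of \cite{XZ2}; this makes your proof more self-contained on the invariance side but longer, whereas the paper's approach is shorter because the monoid $V_{\hat G,\rho_\ad}$ is tailor-made so that $\tr$ is simply "trace at $\sigma$-twisted monoid points." Both arguments bottom out in Lemma~\ref{L: image Chevalley restriction} to pin down the $\bZ$-structure, and both give the uniqueness from injectivity of the right vertical map, so the two proofs are genuinely parallel but distinct.
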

\begin{proof}
We define the map
\[
  \tr: K(\on{Rep}^{\on{int}}({^C}G_E))\to K(\on{Rep}(V_{\hat G,\rho_\ad,E}\rtimes \langle\sigma\rangle)\to \overline{E}[V_{\hat{G},\rho_\ad}|_{d_{\rho_\ad}=q}]^{c_\sigma(G)},
\] 
where the second arrow is induced by taking the trace of representations at elements in $V_{\hat{G}}|_{d=\rho_\ad(q)}\sigma$. The diagram is clearly commutative when the right vertical map is tensored with $\overline{E}$. To see that $\tr$ factors through $\bZ[V_{\hat{G},\rho_\ad}|_{d=q}]^{c_\sigma(\hat{G})}$, it is enough to note that $\bZ[V_{\hat{G},\rho_\ad}|_{d=q}]^{c_\sigma(\hat{G})}= \overline{E}[V_{\hat{G},\rho_\ad}|_{d=q}]^{c_\sigma(\hat{G})}\cap \bZ[\xch(\hat T)^\sigma]$  inside $\overline{E}[\xch(\hat T)^\sigma]$ which follows easily from
Lemma \ref{L: image Chevalley restriction}. The uniqueness is clear as the right vertical map is injective.
\end{proof}

\begin{lem}\label{L: int Vmu}
The homomorphism in Lemma \ref{L: K-ring to function} is surjective.
\end{lem}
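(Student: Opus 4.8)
The plan is to produce, for each $\sigma$-invariant dominant weight, an explicit class in $K(\on{Rep}^{\on{int}}({}^CG_E))$, and to show that the $\tr$-images of these classes span the target $\bZ[V_{\hat{G},\rho_\ad}|_{d_{\rho_\ad}=q}]^{c_\sigma(\hat{G})}$. Recall from Lemma~\ref{L: image Chevalley restriction}, applied with $\la=\rho_\ad$, that the injective map $\eqref{eq: inj inv function on VT}\circ\Res$ appearing in \eqref{E: Kgroup to function} identifies the target with the free $\bZ$-module inside $\bZ[\xch(\hat T)^\sigma]$ spanned by the elements
\[
   b_{\hat\la}:=\sum_{\hat\la'\in W_0\hat\la}q^{\langle\rho_\ad,\,\hat\la'-\hat\la\rangle}\,e^{\hat\la'},\qquad \hat\la\in\xch(\hat T)^{\sigma,-}.
\]
Here each exponent is a genuine nonnegative integer, since $\langle\rho_\ad,\hat\al_i\rangle=\tfrac12\langle 2\rho,\hat\al_i\rangle=1$ and $\hat\la'-\hat\la$ is a nonnegative integral combination of simple roots (as $\hat\la$ is anti-dominant). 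So it suffices to prove that every $b_{\hat\la}$ lies in the image of $\tr$.

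For $\hat\mu\in\xch(\hat T)^{\sigma,+}:=\xch(\hat T)^\sigma\cap\xch(\hat T)^+$ I will use the irreducible representation $V_{\hat\mu}$ of ${}^CG_E$ singled out in Remark~\ref{R: geo Sat}(2). The first step is to check $V_{\hat\mu}\in\on{Rep}^{\on{int}}({}^CG_E)$: since $\bG_m\times\langle\sigma\rangle$ acts trivially on the lowest weight line $V_{\hat\mu}(w_0\hat\mu)$ and each root vector $X_{\hat\al}$ is $\bG_m$-homogeneous of weight $\langle\rho_\ad,\hat\al\rangle$, the torus $\bG_m$ acts on every $\hat T$-weight space $V_{\hat\mu}(\hat\la)$ by the single scalar $q\mapsto q^{\langle\rho_\ad,\hat\la-w_0\hat\mu\rangle}$ with $\langle\rho_\ad,\hat\la-w_0\hat\mu\rangle\ge 0$; hence each $\on{gr}^F_iV_{\hat\mu}(\hat\la)$ is a single $\bG_m$-weight space of nonnegative weight, and the trace of $\sigma$ on it is an integer by Lemma~\ref{L: integrality trace}. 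Thus $[V_{\hat\mu}]\in K(\on{Rep}^{\on{int}}({}^CG_E))$.

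Next I compute the image of $[V_{\hat\mu}]$. By the commutativity of \eqref{E: Kgroup to function} and the injectivity of its right column, $\tr([V_{\hat\mu}])$ is carried to $\sum_{\hat\la\in\xch(\hat T)^\sigma}\tr\big((q,\sigma)\mid V_{\hat\mu}(\hat\la)\big)e^{\hat\la}$, which by the $\bG_m$-weight computation equals $\sum_{\hat\la}q^{\langle\rho_\ad,\hat\la-w_0\hat\mu\rangle}m^\sigma_{\hat\mu}(\hat\la)e^{\hat\la}$, where $m^\sigma_{\hat\mu}(\hat\la):=\tr(\sigma\mid V_{\hat\mu}(\hat\la))$. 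Two standard facts enter: $m^\sigma_{\hat\mu}$ is constant on $W_0$-orbits (the canonical Tits lift of any $w\in W_0=W^\sigma$ is $\sigma$-fixed, because $\sigma$ preserves the pinning, and it gives a $\sigma$-equivariant isomorphism $V_{\hat\mu}(\hat\la)\cong V_{\hat\mu}(w\hat\la)$), and $m^\sigma_{\hat\mu}(w_0\hat\mu)=1$ (the lowest weight line again). Grouping the sum into $W_0$-orbits, each of which contains a unique weight $\hat\nu\in\xch(\hat T)^{\sigma,-}$, necessarily a weight of $V_{\hat\mu}$, and using additivity of $\langle\rho_\ad,\cdot\rangle$, one finds that $\tr([V_{\hat\mu}])$ maps to
\[
   b_{w_0\hat\mu}\;+\;\sum_{\hat\nu}q^{\langle\rho_\ad,\,\hat\nu-w_0\hat\mu\rangle}\,m^\sigma_{\hat\mu}(\hat\nu)\,b_{\hat\nu},
\]
the sum being over those $\hat\nu\in\xch(\hat T)^{\sigma,-}$ that are weights of $V_{\hat\mu}$ other than $w_0\hat\mu$, so $w_0\hat\mu\prec\hat\nu\preceq\hat\mu$.

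Finally I would run the triangularity argument. Since $w_0$ restricts to a bijection $\xch(\hat T)^{\sigma,+}\xrightarrow{\ \sim\ }\xch(\hat T)^{\sigma,-}$, I induct on the nonnegative integer $N(\hat\la):=\langle 2\rho,w_0\hat\la-\hat\la\rangle$ attached to $\hat\la\in\xch(\hat T)^{\sigma,-}$. When $N(\hat\la)=0$ the weight $\hat\la$ is both dominant and anti-dominant, so $W_0\hat\la=\{\hat\la\}$, $b_{\hat\la}=e^{\hat\la}$, and this is the image of $\tr([V_{\hat\la}])$ (the representation $V_{\hat\la}$ being one-dimensional). For the inductive step put $\hat\mu_0=w_0\hat\la$, so that $V_{\hat\mu_0}$ has lowest weight $w_0\hat\mu_0=\hat\la$ and $N(\hat\la)=\langle 2\rho,\hat\mu_0-w_0\hat\mu_0\rangle$; by the previous paragraph $b_{\hat\la}$ equals the image of $\tr([V_{\hat\mu_0}])$ minus a $\bZ$-linear combination of the $b_{\hat\nu}$ with $\hat\la\prec\hat\nu\preceq\hat\mu_0$. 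Each such $\hat\nu$ satisfies $w_0\hat\nu\preceq\hat\mu_0$ (as $w_0\hat\nu$ is a dominant weight of $V_{\hat\mu_0}$) together with $\hat\la\prec\hat\nu$, whence $w_0\hat\nu-\hat\nu\prec\hat\mu_0-w_0\hat\mu_0$ strictly, so $N(\hat\nu)<N(\hat\la)$; thus every $b_{\hat\nu}$ lies in the image by the inductive hypothesis, and therefore so does $b_{\hat\la}$. This proves $\tr$ is surjective. The only delicate point is the elementary poset bookkeeping in this last step; the substantive input is Lemma~\ref{L: integrality trace} — which is precisely where the geometric Satake equivalence and the integral structure on the intersection cohomology sheaves are used — guaranteeing that the $V_{\hat\mu}$ genuinely lie in $\on{Rep}^{\on{int}}$.
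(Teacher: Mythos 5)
Your proof is correct and takes the same route as the paper's: use the representations $V_{\hat\mu}$, verify via Lemma \ref{L: integrality trace} that they belong to $\on{Rep}^{\on{int}}({}^CG_E)$, and compare their images with the explicit $\bZ$-basis of $\bZ[V_{\hat G,\rho_\ad}|_{d_{\rho_\ad}=q}]^{c_\sigma(\hat G)}$ furnished by Lemma \ref{L: image Chevalley restriction}. The paper compresses this comparison into a single clause (``the lemma follows from the explicit description\dots''), and what you have written out --- the $W_0$-invariance of $m^\sigma_{\hat\mu}$ via the $\sigma$-fixed Tits lifts, the unit leading coefficient at $b_{w_0\hat\mu}$, and the descending induction on $N(\hat\la)=\langle 2\rho, w_0\hat\la-\hat\la\rangle$ --- is precisely the unitriangularity argument the paper leaves implicit.
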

\begin{proof}
For a $\sigma$-invariant dominant weight $\hat\mu$, let $V_{\hat\mu}$ be the unique (up to isomorphism) irreducible representation of ${^C}G_E$ as in Remark \ref{R: geo Sat} (2). We claim that $V_{\hat\mu}\in \on{Rep}^{\on{int}}({^C}G_E)$. Then the lemma follows from the explicit description of $\bZ[V_{\hat{G},\rho_\ad}|_{d_{\rho_\ad}=q}]^{c_\sigma(\hat{G})}$ inside $\bZ[\xch(\hat T)^\sigma]$, as in Lemma \ref{L: image Chevalley restriction}.

We need to show that $\tr((1,q,\sigma)\mid \on{gr}_F^iV_{\hat\mu}(\hat\la))\in \bZ$ for $\sigma$-invariant weight $\hat\la$. 
Note that under the map
$\hat{G}\times \bG_m\to \hat{G}^T$ from the isomorphism \eqref{R: another version}, the second factor $\bG_m$ acts on $V_{\hat\mu}$ by a fixed weight (namely $\langle2\rho,\hat\mu\rangle$). Therefore, $(1,q)\in \hat{G}\rtimes\bG_m=\hat{G}^T$ acts on $V_{\hat\mu}(\hat\la)$ by $q^{\langle\rho_{\ad}, \hat\la-w_0\hat\mu\rangle}$. Therefore, the lemma follows by noticing that the trace of $(1, 1,\sigma)$ on $V_{\hat\mu}(\hat\la)$ is an integer, by Lemma \ref{L: integrality trace}.
\end{proof}

\subsection{From $\Sat$ to $\Sat^\cl$}
Let $\Sat_{G,\ell}^{T,\on{int}}$ be the full subcategory of $\Sat_{G,\ell}^{T}$ consisting of those $\mF$ such that $f_{\mF}\in H_G$, i.e. $f_{\mF}(\hat\la(\varpi))\in\bZ$ for every $\hat\la\in\xch(\hat T)^\sigma$. 
By Proposition \ref{P: trace Frob}, we have the following statement.
\begin{lem}
The geometric Satake induces an equivalence $\on{Rep}^{\on{int}}({^C}G_E)\cong \on{Sat}_{G,\ell}^{T,\on{int}}$.
\end{lem}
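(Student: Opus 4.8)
The plan is to show that the equivalence $\Sat: \on{Rep}({}^CG_{\bQ_\ell})\cong \Sat_{G,\ell}^T$ of Theorem \ref{T: geom Sat} restricts to an equivalence between the two full subcategories in question. Since $\Sat$ is already a fully faithful tensor functor which is essentially surjective, it suffices to check that an object $V\in\on{Rep}({}^CG_{\bQ_\ell})$ lies in $\on{Rep}^{\on{int}}({}^CG_E)$ (with $E=\bQ_\ell$) if and only if its image $\Sat(V)$ lies in $\Sat_{G,\ell}^{T,\on{int}}$, i.e. if and only if the Frobenius trace function $f_{\Sat(V)}$ takes values in $\bZ$ on all points $\hat\la(\varpi)$ with $\hat\la\in\xch(\hat T)^\sigma$.

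The key input is Proposition \ref{P: trace Frob}, which for $\hat\la\in\xch(\hat T)^{\sigma,-}$ gives
\[
  f_{\Sat(V)}(\hat\la(\varpi))=(-1)^{\langle2\rho,\hat\la\rangle}\sum_i\tr\bigl((1,q,\sigma)\mid \on{gr}_i^FV(\hat\la)\bigr)q^{-i}.
\]
First I would reduce to the case of anti-dominant $\hat\la$: since $f_{\Sat(V)}$ is bi-$K$-invariant, $f_{\Sat(V)}(\hat\la(\varpi))$ depends only on the $W$-orbit (equivalently the $W_0$-orbit for $\sigma$-invariant $\hat\la$) of $\hat\la$, so integrality for all $\sigma$-invariant weights is equivalent to integrality for all $\hat\la\in\xch(\hat T)^{\sigma,-}$. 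Now, by definition of $\on{Rep}^{\on{int}}({}^CG_E)$, membership means precisely that each $\on{gr}_i^FV(\hat\la)$ lies in $\on{Rep}^{\on{int}}(\bG_m\times\langle\sigma\rangle)$, which as noted in the text is equivalent to $\tr((q,\sigma)\mid \on{gr}_i^FV(\hat\la))\in\bZ$ for every $i$ and every $\sigma$-invariant $\hat\la$. Combined with the formula above, this immediately gives that $V\in\on{Rep}^{\on{int}}$ implies $f_{\Sat(V)}(\hat\la(\varpi))\in\bZ$, hence $\Sat(V)\in\Sat_{G,\ell}^{T,\on{int}}$.

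For the converse, I would argue that integrality of all the values $f_{\Sat(V)}(\hat\la(\varpi))$ forces integrality of each graded piece trace separately. The point is that $\sum_i\tr((1,q,\sigma)\mid\on{gr}_i^FV(\hat\la))q^{-i}$ is an element of $\bZ[q^{-1}]$ (a priori in $\bQ_\ell$, but it equals $\pm f_{\Sat(V)}(\hat\la(\varpi))\in\bZ$ once we know integrality), and one wants each coefficient $\tr((1,q,\sigma)\mid\on{gr}_i^FV(\hat\la))\in\bZ$. Here I would use the fact that $(1,q)\in\hat G\rtimes_{\Ad\rho_\ad}\bG_m=\hat G^T$ acts on the weight space $V(\hat\la)$ — and hence on each Brylinski--Kostant graded piece — by a \emph{single} scalar power of $q$ (as in the proof of Lemma \ref{L: int Vmu}: the central $\bG_m$ in \eqref{R: another version} acts by a fixed weight on each $\hat G$-isotypic component, so after decomposing $V$ into $\hat G^T$-irreducibles one controls the $q$-grading), so that distinct graded pieces $\on{gr}_i^F$ contribute distinct powers $q^{-i}$ up to this fixed shift; matching powers of $q$ then shows each $\tr((q,\sigma)\mid\on{gr}_i^FV(\hat\la))$ is a rational number which, being an algebraic integer (it is a sum of roots of unity times integers coming from the $\sigma$-action), must lie in $\bZ$. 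Thus $\Sat(V)\in\Sat_{G,\ell}^{T,\on{int}}$ implies $V\in\on{Rep}^{\on{int}}({}^CG_E)$.

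The main obstacle I anticipate is the converse direction: cleanly separating the contributions of the different Brylinski--Kostant graded pieces by their $q$-powers when $V$ is not irreducible. If $V$ decomposes as a sum of $\hat G^T$-irreducibles with differing central $\bG_m$-weights, the naive matching of powers of $q$ can collide, and one has to be careful that the combined trace function still pins down each piece. The resolution is to first reduce to $V$ irreducible over ${}^CG$ (the subcategories in question are closed under subquotients, and irreducibles of ${}^CG$ restrict to $V_{\hat\mu}$'s as in Remark \ref{R: geo Sat}(2)), for which the central $\bG_m$ acts by the single weight $\langle2\rho,\hat\mu\rangle$ and the argument of Lemma \ref{L: int Vmu} applies verbatim; the general case then follows since $K(\Sat_{G,\ell}^{T,\on{int}})$ and $K(\on{Rep}^{\on{int}})$ are spanned by these irreducible objects. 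Everything else is a direct translation through Theorem \ref{T: geom Sat} and Proposition \ref{P: trace Frob}.
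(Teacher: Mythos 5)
Your overall strategy — passing both integrality conditions through the trace formula of Proposition \ref{P: trace Frob} — is exactly what the paper does (the paper's entire justification is ``By Proposition \ref{P: trace Frob}''). But your attempt to flesh out the converse direction has a genuine gap. The assertion that ``the subcategories in question are closed under subquotients'' is false when $\sigma$ is nontrivial: the defining condition is that the trace of $\sigma$ (a finite-order element, whose eigenvalues are roots of unity) on each $\bG_m$-weight piece of $\on{gr}_i^F V(\hat\la)$ lies in $\bZ$, and this is not inherited by $\sigma$-stable direct summands — two summands with non-integral algebraic-integer $\sigma$-traces can perfectly well sum to an integer. So $\on{Rep}^{\on{int}}({^C}G_E)$ and $\Sat_{G,\ell}^{T,\on{int}}$ are full tensor subcategories but not Serre subcategories, and your reduction to ${^C}G$-irreducibles is unjustified. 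The fallback appeal to $K$-groups being ``spanned by irreducible objects'' does not repair this: a Grothendieck-group statement cannot determine membership in a full subcategory that is not closed under subobjects, and the spanning claim is in any case close to what is being proved. The ``matching powers of $q$'' step likewise presupposes that an element of $\bZ$ determines its coefficients as a $q$-expansion with algebraic-integer coefficients, which is not automatic for a fixed prime power $q$.

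A smaller issue in the forward direction: Proposition \ref{P: trace Frob} produces $\sum_i \tr\bigl((1,q,\sigma)\mid \on{gr}_i^F V(\hat\la)\bigr)q^{-i}$, so integrality of each $\tr\bigl((1,q,\sigma)\mid \on{gr}_i^F V(\hat\la)\bigr)$ only places the value in $\bZ[q^{-1}]$. To land in $\bZ$ one must also observe that the $\bG_m$-weight on $\on{gr}_i^F V(\hat\la)$ is $\geq i$, which follows from $\mathfrak{sl}_2$-theory for the principal triple (equivalently, from the degree bound on the stalk cohomology of $\IC_{\hat\mu}$). You say ``immediately gives'' and skip this; it is implicit in the paper's setup, but it should be spelled out if one is trying to give a careful proof.
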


\begin{lem}\label{L: surj fun}
Taking the trace Frobenius function induces a surjective ring homomorphism 
\[\tr: K(\on{Sat}_{G,\ell}^{T,\on{int}})\to H_G.\]
\end{lem}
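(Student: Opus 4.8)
The plan is to treat the two assertions separately: that $\tr$ is a well-defined ring homomorphism (which is formal), and that it is surjective (the substantive point, which I would reduce to the unitriangularity of the functions $f_{\IC_{\hat\mu}}$ with respect to the Cartan basis of $H_G$).

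For the first assertion I would note that $\Sat_{G,\ell}^{T,\on{int}}$ is a semisimple abelian category and that the Frobenius trace function is additive in distinguished triangles, so $\mF\mapsto f_{\mF}$ factors through $K(\Sat_{G,\ell}^{T,\on{int}})$; the monoidal structure on $\Sat_{G,\ell}^{T}$ is the convolution product, and the sheaf--function dictionary gives $f_{\mF_1\star\mF_2}=f_{\mF_1}\star f_{\mF_2}$, so since $H_G=C_c(K\backslash G(F)/K,\bZ)$ is stable under convolution, $\Sat_{G,\ell}^{T,\on{int}}$ is a monoidal subcategory and $\tr$ is a ring homomorphism into $H_G$. (Equivalently, one transports this through the equivalence $\on{Rep}^{\on{int}}({^C}G_E)\cong\Sat_{G,\ell}^{T,\on{int}}$ of the preceding lemma.)

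For surjectivity, recall that by the Cartan decomposition the characteristic functions $\{\mathbf{1}_{K\hat\mu(\varpi)K}\}$, indexed by the $\sigma$-invariant dominant coweights $\hat\mu\in\xch(\hat T)^{\sigma}$, form a $\bZ$-basis of $H_G$. For such $\hat\mu$ the intersection complex $\IC_{\hat\mu}$, with its canonical $\Gal(\tilde k/k)$-equivariant structure, is an object of $\Sat_{G,\ell}^{T}$, and it lies in $\Sat_{G,\ell}^{T,\on{int}}$ because under geometric Satake it corresponds to $V_{\hat\mu}$, which belongs to $\on{Rep}^{\on{int}}({^C}G_E)$ by (the proof of) Lemma \ref{L: int Vmu}. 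Since $\IC_{\hat\mu}$ is supported on $\Gr_{\le\hat\mu}$ and $\IC_{\hat\mu}|_{\Gr_{\hat\mu}}=\bQ_\ell[\langle2\rho,\hat\mu\rangle]$ has stalks on which Frobenius acts trivially, the trace function has the form
\[
 f_{\IC_{\hat\mu}}\;=\;(-1)^{\langle2\rho,\hat\mu\rangle}\Bigl(\mathbf{1}_{K\hat\mu(\varpi)K}+\sum_{\hat\la}a_{\hat\la}\,\mathbf{1}_{K\hat\la(\varpi)K}\Bigr),\qquad a_{\hat\la}\in\bZ,
\]
the sum running over the finitely many $\sigma$-invariant dominant $\hat\la$ with $\hat\la\prec\hat\mu$. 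An induction on the partial order $\preceq$ (base case $\hat\mu=0$, where $f_{\IC_0}=\mathbf{1}_{K}$) then shows every basis element $\mathbf{1}_{K\hat\mu(\varpi)K}$ lies in the image of $\tr$, so $\tr$ is surjective.

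The only input here that is not purely formal is the membership $\IC_{\hat\mu}\in\Sat_{G,\ell}^{T,\on{int}}$, i.e.\ the integrality of \emph{all} the values $f_{\IC_{\hat\mu}}(\hat\la(\varpi))$, and this is exactly what Lemma \ref{L: int Vmu} (together with Proposition \ref{P: trace Frob} and Lemma \ref{L: integrality trace}) supplies; granting that, the present lemma is just bookkeeping with the Cartan decomposition. Accordingly I do not expect a genuine obstacle, beyond keeping the conventions straight (dominant versus anti-dominant coweights, and the normalization of $\IC_{\hat\mu}$) so that the displayed unitriangular expansion holds on the nose.
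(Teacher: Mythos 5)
Your proof is correct and follows essentially the same route as the paper: both establish surjectivity via the unitriangular expansion $f_{\IC_{\hat\mu}}=(-1)^{\langle2\rho,\hat\mu\rangle}\mathbf{1}_{K\hat\mu(\varpi)K}+\sum_{\hat\mu'<\hat\mu}a_{\hat\mu\hat\mu'}\mathbf{1}_{K\hat\mu'(\varpi)K}$ with $a_{\hat\mu\hat\mu'}\in\bZ$, which comes from Proposition~\ref{P: trace Frob} and Lemma~\ref{L: int Vmu}, together with the Cartan decomposition giving the $\bZ$-basis $\{\mathbf{1}_{K\hat\mu(\varpi)K}\}$ of $H_G$. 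Your write-up merely makes explicit the ring-homomorphism verification and the downward induction on the dominance order, both of which the paper leaves implicit.
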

\begin{proof}
For dominant $\hat\mu\in\xch(\hat T)^{\sigma}$, by Proposition \ref{P: trace Frob} and Lemma \ref{L: int Vmu},
\begin{equation}\label{E: tr function}
   f_{\IC_{\hat \mu}}=(-1)^{\langle2\rho,\hat\mu\rangle}1_{K\hat\mu(\varpi) K}+ \sum_{\hat\mu'<\hat\mu}a_{\hat\mu\hat\mu'}1_{K\hat\mu'(\varpi) K},\quad a_{\hat\mu\hat\mu'}\in\bZ,
\end{equation}
where $1_{K\hat\mu'(\varpi) K}$ denotes the characteristic function on $K\hat\mu'(\varpi)K$ and ``$<$" denotes the Bruhat order. Then the lemma follows since these $1_{K\hat\mu(\varpi) K}$'s form a $\bZ$-basis of $H_G$.
\end{proof}

Note that the Grothendieck-Lefschetz trace formula implies that
\[
   \tr\circ K(\on{CT})= \on{CT}^{\cl}\circ \tr: K(\Sat_{G,\ell}^{T,\on{int}})\to \bZ[\xch(\hat{T})^\sigma].
\]   
Together with \eqref{E: res levi} and \eqref{E: Kgroup to function}, we obtain the following commutative diagram
\[\xymatrix{
K(\on{Rep}^{\on{int}}({^C}G_{\bQ_\ell}))\ar_-{K(\Sat)}^-\cong[d]\ar^{\tr}[r]& \bZ[V_{\hat G}|_{d=\rho_\ad(q)}]^{c_\sigma(G)}\ar^\Res[r] &  \bZ[V_{\hat T}|_{d=\rho_\ad(q)}]^{c_\sigma(\hat{N}_0)}\ar^{\eqref{eq: inj inv function on VT}}[d]\\
K(\Sat_{G,\ell}^{T,\on{int}})\ar^-{\tr}[r]& H_G \ar^{\on{CT}^{\cl}}[r] & \bZ[\xch(\hat T)^\sigma].
}\]
Both trace maps $\tr$ in the above diagram are surjective, by Lemma \ref{L: int Vmu} and \ref{L: surj fun}. Therefore, we can complete the diagram by adding an isomorphism $\Sat^\cl$ in the middle column, as desired.

\quash{
\begin{lem}
Let $(H, B_H, T_H, e_H)$ be a pinned reductive group over a field $E$ of characteristic zero and let $\sigma$ be a finite order automorphism of the pinning. If $\nu\in \xch(T_H)$ is a $\sigma$-invariant dominant weight, and let $V_\nu$ be the corresponding irreducible representation of $H$. Then
\begin{enumerate}
\item The representation $V_\nu$ extends uniquely to a representation of $H\rtimes\langle\sigma\rangle$ such that $\sigma=\id: V_\nu(\nu)\to V_\nu(\nu)$.
\item For every $\sigma$ invariant weight $\la$, the trace of $\sigma: V_\nu(\la)\to V_\nu(\la)$ is an integer.
\end{enumerate}
\end{lem}
\begin{proof}
Part (2) follows from Jantzen's character formula
\end{proof}
}


\begin{thebibliography}{99999999}

\bibitem[BG11]{BG}K. Buzzard, T. Gee, The conjectural connections between automorphic representations and Galois representations, {\it Proceedings of the LMS Durham Symposium}, 2011.

\bibitem[Ca19]{Ca}R. Cass, Perverse $\bF_p$ sheaves on the affine Grassmannian, arXiv:1910.03377.

\bibitem[Gr96]{Gr}B.H.  Gross, On the Satake isomorphism, in {\it Galois representations in arithmetic algebraic geometry (Durham, 1996)}, {\it London Math. Soc. Lecture Note Ser.} {\bf 254} (1998), 223--237.

\bibitem[He11]{He}F. Herzig, A Satake isomorphism in characteristic $p$, {\it Compositio Mathematica} {\bf 147} no. 1 (2011), 263--283.

\bibitem[HV15]{HV}G. Henniart, M.-F. Vign\'eras, A Satake isomorphism for representations modulo $p$
of reductive groups over local fields, {\it J. Reine Angew. Math.} {\bf 701} (2015), 33--75.

\bibitem[Ja73]{Ja}J. C. Jantzen, Darstellungen Halbeinfacher Algebraischer Groupen, {\it Bonner Math. Schriften} {\bf 67} (1973).

\bibitem[MV07]{MV}I. Mirkovi\'c and K. Vilonen, 
Geometric Langlands duality and representations of algebraic groups over commutative rings, {\it
Ann. Math.} {\bf 166} (2007), 95--143.

\bibitem[Ri14]{Ri}T. Richarz, A new approach to the geometric Satake equivalence, {\it Doc. Math.} {\bf 19} (2014), 209--246.

\bibitem[RZ14]{RZ}T. Richarz, X. Zhu, Construction of the full Langlands dual group via the geometric Satake correspondence. {\it Appendix to \cite{Z15}}.

\bibitem[ST06]{ST}P. Schneider, J. Teitelbaum, Banach-Hecke algebras and $p$-adic Galois representations. {\it Doc. Math.} Extra Vol (2006), 631--684.

\bibitem[XZ19]{XZ2}L. Xiao, X. Zhu, On vector-valued twisted conjugate invariant functions on a
group, in {\it Representations of Reductive Groups}, {\it Proceedings of Symposia in Pure
Mathematics} {\bf 101} (2019), 361--426.

\bibitem[Zhu15]{Z15}X. Zhu, The Geometric Satake Correspondence for Ramified Groups, {\it Ann. Sci. \'Ec. Norm. Sup\'er.} {\bf 48} (2015),  409--451.

\bibitem[Zhu17a]{Z17a}
X. Zhu, Affine Grassmannians and the geometric Satake in mixed characteristic, {\it Ann. Math.} {\bf 185} (2017), 403--492.

\bibitem[Zhu17b]{Z17b}
X. Zhu, An introduction to affine Grassmannians and the geometric Satake equivalence, {\it IAS/Park City Mathematics Series} {\bf 24} (2017), 59--154.
\end{thebibliography}
\end{document}